\theoremstyle{plain}
\newtheorem{thm}{Theorem}[section]
\newtheorem{pr}{Proposition}[section]
\newtheorem{co}{Corollary}[section]
\newtheorem{lem}{Lemma}[section]
\theoremstyle{remark}
\newtheorem*{rem}{Remark}
\newtheorem*{nota}{Notation}
\newtheorem*{notas}{Notations}
\theoremstyle{definition}
\newtheorem*{de}{Definition}
\numberwithin{equation}{section}
\begin{document}
\bibliographystyle{plain}
\title{Fefferman-Stein inequalities for the $\mathbb Z_2^d$ Dunkl maximal operator}
\author{Luc DELEAVAL}
\address{Institut de Mathématiques de Jussieu, Université Pierre et Marie Curie, 175, rue du Chevaleret, 75013 Paris, France}
\email{deleaval@math.jussieu.fr}
\thanks{The author is pleased to express his respectful thanks to the referee for his/her careful reading of the manuscript and for his/her comments which contributed to the improvement of the quality of the paper. He also wishes to thank his supervisor Sami Mustapha for sharing his ideas with him.}
\keywords{Dunkl maximal operator, Dunkl transform, Fefferman-Stein inequalities, Harmonic analysis}
\subjclass[2000]{42B10, 42B25}
\begin{abstract} 
In this article, we establish the Fefferman-Stein inequalities for the Dunkl maximal operator associated with a finite reflection group generated by the sign changes. Similar results are also given for a large class of operators related to Dunkl's analysis.
\end{abstract}
\maketitle
\section{Introduction}
In the early seventies, C. Fefferman and E. M. Stein have proved in \cite{fs} the following extension of the Hardy-Littlewood maximal theorem.
\begin{thm}
Let  $(f_n)_{n\geqslant1}$ be a sequence of measurable functions defined on $\mathbb R^d$ and let $M$ be the well-known maximal operator given by 
\[
Mf(x)=\sup \frac{1}{m(Q)}\int_Q|f(y)|\,\mathrm{d}y, \quad x \in \mathbb R^d,
\]
where the $\sup$ is taken over all cubes $Q$ centered at $x$ and $m(X)$ is the Lebesgue measure of $X$. 
\begin{enumerate}
\item If $1<r<+\infty$, $1<p<+\infty$ and if $\bigl(\sum_{n=1}^\infty|f_n(\cdot)|^r\bigr)^\frac{1}{r} \in L^p(\mathbb R^d;\mathrm{d}m)$, then we have 
\[
\biggl\|\Bigl(\sum_{n=1}^\infty|Mf_n(\cdot)|^r\Bigr)^\frac{1}{r}\biggr\|_p\leqslant C\biggl\|\Bigl(\sum_{n=1}^\infty|f_n(\cdot)|^r\Bigr)^\frac{1}{r}\biggr\|_p,
\]
where $C=C(r,p)$ is independent of $(f_n)_{n\geqslant1}$.
\item If $1<r<+\infty$ and if $\bigl(\sum_{n=1}^\infty|f_n(\cdot)|^r\bigr)^\frac{1}{r} \in L^1(\mathbb R^d;\mathrm{d}m)$, then  for every $\lambda>0$ we have
\[
m\biggl(\biggl\{x \in \mathbb R^d: \Bigl(\sum_{n=1}^\infty|Mf_n(x)|^r\Bigr)^\frac{1}{r}>\lambda\biggr\} \biggr)\leqslant \frac{C}{\lambda}\biggl\|\Bigl(\sum_{n=1}^\infty|f_n(\cdot)|^r\Bigr)^\frac{1}{r}\biggr\|_1,
\]
where $C=C(r)$ is independent of $(f_n)_{n\geqslant1}$ and $\lambda$.
\end{enumerate}
\end{thm}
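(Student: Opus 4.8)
The plan is to reduce everything to scalar facts---the Hardy--Littlewood theorem for $M$ (strong type $(q,q)$ for $1<q\leqslant+\infty$ and weak type $(1,1)$) together with the Fefferman--Stein weighted inequality $\int_{\mathbb R^d}(Mf)\,w\leqslant C\int_{\mathbb R^d}|f|\,(Mw)$, which I treat as known---and to settle the ranges $p=r$, $p>r$, the weak endpoint, and $1<p<r$ in turn. Throughout write $F=\bigl(\sum_n|f_n|^r\bigr)^{1/r}$ and $G=\bigl(\sum_n|Mf_n|^r\bigr)^{1/r}$; by monotone convergence I may assume the sequence is finite, the constants being independent of its length. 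The diagonal case $p=r$ of part (1) is then immediate: by Tonelli and the scalar strong $(r,r)$ bound,
\[
\|G\|_r^r=\sum_n\|Mf_n\|_r^r\leqslant C^r\sum_n\|f_n\|_r^r=C^r\|F\|_r^r,
\]
which already furnishes the strong (hence weak) $(r,r)$ vector inequality that I reuse below.

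For the case $p>r$ of part (1) I would argue by duality. With $s=p/r>1$ one has $\|G\|_p^r=\bigl\|\sum_n(Mf_n)^r\bigr\|_s=\sup\bigl\{\int\sum_n(Mf_n)^r\,u:u\geqslant0,\ \|u\|_{s'}\leqslant1\bigr\}$. For $r\geqslant1$, Jensen's inequality applied to each average gives the pointwise bound $(Mf_n)^r\leqslant M(|f_n|^r)$, so the weighted inequality yields
\[
\sum_n\int (Mf_n)^r\,u\leqslant\sum_n\int M(|f_n|^r)\,u\leqslant C\sum_n\int|f_n|^r\,Mu=C\int F^r\,Mu.
\]
H\"older with the pair $(s,s')$ bounds the right-hand side by $C\|F\|_p^r\|Mu\|_{s'}$, and since $s'>1$ the scalar strong $(s',s')$ bound gives $\|Mu\|_{s'}\leqslant C\|u\|_{s'}\leqslant C$. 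Taking the supremum over $u$ gives $\|G\|_p\leqslant C\|F\|_p$ for every $p>r$.

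The heart of the matter is the weak endpoint, part (2), and this is where I expect the main obstacle. The natural Calder\'on--Zygmund decomposition of $(f_n)$ at height $\lambda$ controls the good part painlessly---its $\ell^r$-norm is $\lesssim\lambda$, so Chebyshev together with the strong $(r,r)$ bound and the elementary estimate $\int\bigl(\sum_n|g_n|^r\bigr)\leqslant C\lambda^{r-1}\|F\|_1$ give the desired $C\lambda^{-1}\|F\|_1$ bound---but the bad part resists the usual treatment: $M$ is merely sublinear and positive and detects no cancellation, so the crude size estimate on the complement of the dilated cubes produces the non-integrable kernel $\mathrm{dist}(x,Q_j)^{-d}$. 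To get around this I would linearise: on $\{G>\lambda\}$ select, measurably and for each $n$, a cube $Q_n(x)\ni x$ whose average nearly attains $Mf_n(x)$, together with the $\ell^{r'}$ multipliers realising the $\ell^r$-norm pointwise; this expresses the quantity to be estimated through linear averaging operators, and passing to their adjoints reduces matters, via the weighted inequality, to a bound on a dual vector-valued maximal expression in $L^{r'}$. Securing that dual bound---notwithstanding the fact that the vector maximal operator fails to be bounded on $L^\infty(\ell^{r'})$---is the delicate step, and it is precisely here that the weighted inequality must be exploited to the hilt.

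Finally, the remaining range $1<p<r$ of part (1) costs nothing new: the map $(f_n)\mapsto G$ is sublinear (by the triangle inequalities in $\ell^r$ and for $M$) and, by the previous steps, of weak type $(1,1)$ and of strong type $(r,r)$ on the scale of spaces $L^p(\mathbb R^d;\ell^r)$, so the Marcinkiewicz interpolation theorem delivers the strong $(p,p)$ inequality for all $1<p<r$. Together with the cases $p=r$ and $p>r$ this covers the whole range $1<p<+\infty$ and completes part (1).
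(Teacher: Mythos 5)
Your overall architecture---scalar maximal theorem, weighted inequality, Calder\'on--Zygmund decomposition---is precisely the one this paper attributes to \cite{fs} (the paper does not reprove the statement; it cites it, and later reuses the same three ingredients for Theorem \ref{fsmkappa}), and your treatment of the cases $p=r$ and $p>r$ follows the standard route. Two problems remain. The lesser one: the weighted inequality you declare known, $\int_{\mathbb R^d}(Mf)\,w\leqslant C\int_{\mathbb R^d}|f|\,Mw$, is false (take $w\equiv1$: it would assert $\|Mf\|_1\leqslant C\|f\|_1$). The correct statements are $\int_{\mathbb R^d}(Mf)^q\,w\leqslant C_q\int_{\mathbb R^d}|f|^q\,Mw$ for $1<q<+\infty$ and, at the endpoint, only the weak form $\int_{\{Mf>\lambda\}}w\leqslant\frac{C}{\lambda}\int_{\mathbb R^d}|f|\,Mw$ (compare \eqref{pourpoids} in the paper). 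Your $p>r$ argument literally passes through the false $q=1$ version applied to $|f_n|^r$; it is repaired by applying the true inequality with $q=r$ directly to $f_n$, which makes the detour through Jensen unnecessary and yields the same bound $\sum_n\int(Mf_n)^r\,u\leqslant C\int F^r\,Mu$.

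The serious gap is part (2), which you do not prove. You diagnose the obstruction correctly---$M$ detects no cancellation, so the crude estimate of the bad part off the dilated cubes produces the non-integrable kernel $\mathrm{dist}(x,Q_j)^{-d}$---but the proposed remedy (linearise the maximal operator, pass to adjoints, and establish ``a bound on a dual vector-valued maximal expression in $L^{r'}$'') is never executed, and you yourself label it ``the delicate step.'' That step \emph{is} the weak-type endpoint; as it stands part (2) is unproven, and since your range $1<p<r$ of part (1) is obtained by Marcinkiewicz interpolation against that endpoint, it collapses with it, leaving only $p\geqslant r$ established. To close the gap one must actually control the bad part: for $x$ outside the dilated cube $Q_j^*$, every cube through $x$ meeting $Q_j$ has a bounded dilate containing $Q_j$, which gives the pointwise domination $Mb_{n,j}(x)\leqslant C\,M(\chi_{Q_j})(x)\cdot|Q_j|^{-1}\int_{Q_j}|b_{n,j}|$; one then sums in $j$, applies Minkowski's integral inequality in $\ell^r$ over $n$ to reduce to the scalar averages of $F$ over the $Q_j$ (which are $\leqslant C\lambda$), and invokes the weak form of the weighted inequality. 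This is the genuinely nontrivial portion of the Fefferman--Stein argument, and a proof that defers it cannot be considered complete.
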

One would like to extend this result to the case of the Dunkl maximal operator~$M_\kappa$ which is defined according to S. Thangavelu and Y. Xu (see \cite{tx}) by 
\[
M_\kappa f(x)=\sup_{r>0}\,\frac{1}{\mu_\kappa(B_r)}\,\bigl|(f*_\kappa\chi_{_{B_r}})(x)\bigr|, \quad x \in \mathbb R^d,
\]
where we denote by $\chi_{_{X}}$ the characteristic function of the set $X$, by $B_r$ the Euclidean ball centered at the origin and whose radius is $r$, by $\mu_\kappa$ a weighted Lebesgue measure invariant under the action of a finite reflection group and by $*_\kappa$ the Dunkl convolution operator (see Section $2$ for more details). 
\newline \indent However, the lack of information on this convolution, which is defined through a generalized translation operator (also called Dunkl translation), prevents from stating a general result. Just as in the  study of the weighted Riesz transform associated with the Dunkl transform (see \cite{tx2}), we can only establish a complete result for the finite reflection group $G\simeq\mathbb Z_2^d$ with the associated measure $\mu_\kappa$ given for every $x=(x_1,\ldots,x_d) \in \mathbb R^d$ by
\begin{equation} \label{measu}
\mathrm{d}\mu_\kappa(x)=h_\kappa^2(x)\,\mathrm{d}x,
\end{equation}
with $h_\kappa$ the $\mathbb Z_2^d$-invariant function defined by
\[
h_\kappa(x)=\prod_{j=1}^d|x_j|^{\kappa_j}=\prod_{j=1}^dh_{\kappa_j}(x_j),
\]
where $\kappa_1,\ldots,\kappa_d$ are nonnegative real numbers (let us note that $h_\kappa$ is homogeneous of degree $\gamma_\kappa=\sum_{j=1}^d\kappa_j$).
\newline \indent To become more precise, the aim of this paper is to prove the following Fefferman-Stein inequalities, where we denote by $L^p(\mu_\kappa)$ the space $L^p(\mathbb R^d;\mathrm d\mu_\kappa)$ and we use the shorter notation $\mathopen\|\cdot\mathclose\|_{\kappa,p}$ instead of $\mathopen\|\cdot\mathclose\|_{L^p(\mu_\kappa)}$. For $p \in [1,+\infty]$, the space~$L^p(\mu_\kappa)$ is of course the space of measurable functions on $\mathbb R^d$ such that 
\begin{alignat*}{4}
&\|f\|_{\kappa,p}&&=\biggl(\int_{\mathbb R^d}|f(y)|^p\,\mathrm{d}\mu_\kappa(y)\biggr)^\frac{1}{p}<+\infty\ \ \ \ &&\text{if}\ 1\leqslant p<+\infty, \\
&\|f\|_{\kappa,\infty}&&=\text{ess}\sup_{y \in \mathbb R^d}|f(y)|<+\infty\ \ \ \ &&\text{otherwise}.
\end{alignat*}
\begin{thm} \label{mainresult}
Let $G\simeq\mathbb Z_2^d$ and let $\mu_\kappa$ be the measure given by \eqref{measu}. Let  $(f_n)_{n\geqslant1}$ be a sequence of measurable functions defined on $\mathbb R^d$. 
\begin{enumerate} 
\item If $1<r<+\infty$, $1<p<+\infty$ and if $\bigl(\sum_{n=1}^\infty|f_n(\cdot)|^r\bigr)^\frac{1}{r} \in L^p(\mu_\kappa)$, then we have 
\[
\biggl\|\Bigl(\sum_{n=1}^\infty|M_\kappa f_n(\cdot)|^r\Bigr)^\frac{1}{r}\biggr\|_{\kappa,p}\leqslant C\biggl\|\Bigl(\sum_{n=1}^\infty|f_n(\cdot)|^r\Bigr)^\frac{1}{r}\biggr\|_{\kappa,p},
\]
where $C=C(\kappa_1,\ldots,\kappa_d,r,p)$ is independent of $(f_n)_{n\geqslant1}$.
\item If $1<r<+\infty$ and if  $\bigl(\sum_{n=1}^\infty|f_n(\cdot)|^r\bigr)^\frac{1}{r} \in L^1(\mu_\kappa)$, then for every $\lambda>0$ we have
\[
\mu_\kappa\biggl(\biggl\{x \in \mathbb R^d: \Bigl(\sum_{n=1}^\infty|M_\kappa f_n(x)|^r\Bigr)^\frac{1}{r}>\lambda\biggr\} \biggr)\leqslant \frac{C}{\lambda}\biggl\|\Bigl(\sum_{n=1}^\infty|f_n(\cdot)|^r\Bigr)^\frac{1}{r}\biggr\|_{\kappa,1},
\]
where $C=C(\kappa_1,\ldots,\kappa_d,r)$ is independent of $(f_n)_{n\geqslant1}$ and $\lambda$.
\end{enumerate}
\end{thm}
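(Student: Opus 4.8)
The plan is to derive both vector-valued statements from scalar information about $M_\kappa$, after first replacing $M_\kappa$ by a more tractable maximal operator. The essential geometric fact in the $\mathbb Z_2^d$ setting is that the Dunkl translation of a radial function admits a nonnegative product formula; combined with the homogeneity and the doubling of $\mu_\kappa$ (an easy consequence of the explicit form of $h_\kappa$), this yields a pointwise domination
\[
M_\kappa f(x)\leqslant C\,\mathcal Mf(x),\qquad \mathcal Mf(x)=\sup_{x\in B}\frac{1}{\mu_\kappa(B)}\int_B|f(y)|\,\mathrm d\mu_\kappa(y),
\]
where $B$ ranges over the balls of the orbit distance $d(x,y)=\min_{g\in G}|x-gy|$, so that $(\mathbb R^d,d,\mu_\kappa)$ is a space of homogeneous type. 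Since $(\sum_n|M_\kappa f_n|^r)^{1/r}\leqslant C(\sum_n|\mathcal Mf_n|^r)^{1/r}$, it suffices to prove Theorem~\ref{mainresult} with $M_\kappa$ replaced by $\mathcal M$. For $\mathcal M$ I would use two classical scalar ingredients, valid on any space of homogeneous type: the weak $(1,1)$ and strong $(p,p)$ bounds on $L^p(\mu_\kappa)$ (from a Vitali covering argument and interpolation with the trivial $L^\infty$ bound), together with the Fefferman--Stein weighted maximal inequality
\[
\int_{\mathbb R^d}(\mathcal Mf)^q\,w\,\mathrm d\mu_\kappa\leqslant C\int_{\mathbb R^d}|f|^q\,(\mathcal Mw)\,\mathrm d\mu_\kappa,\qquad 1<q<+\infty,\ w\geqslant0.
\]

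For part~(1) I would distinguish three regimes. If $r=p$, raising to the power $p$, integrating, and applying the scalar $L^p(\mu_\kappa)$ bound to each $f_n$ gives the result after summation in $n$. If $1<r<p$, I would dualize the $L^{p/r}(\mu_\kappa)$ norm: writing the left-hand side to the power $r$ as $\|\sum_n(\mathcal Mf_n)^r\|_{\kappa,p/r}$ and testing against a nonnegative $h$ with $\|h\|_{\kappa,(p/r)'}\leqslant1$, the weighted inequality with $q=r$ gives
\[
\int_{\mathbb R^d}\Bigl(\sum_n(\mathcal Mf_n)^r\Bigr)h\,\mathrm d\mu_\kappa\leqslant C\int_{\mathbb R^d}\Bigl(\sum_n|f_n|^r\Bigr)(\mathcal Mh)\,\mathrm d\mu_\kappa,
\]
and H\"older's inequality together with the $L^{(p/r)'}(\mu_\kappa)$ boundedness of $\mathcal M$ (note $(p/r)'>1$) closes the estimate. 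Finally, if $r>p$, I would use vector-valued duality: pairing $(\mathcal Mf_n)_n$ with a sequence $(g_n)_n$, $g_n\geqslant0$, normalized by $\bigl\|(\sum_n g_n^{r'})^{1/r'}\bigr\|_{\kappa,p'}\leqslant1$, I would linearize each $\mathcal Mf_n$ by a measurable choice of near-optimal ball, transfer the averaging operator onto $g_n$ via its adjoint, dominate the latter by $\mathcal M$, and invoke the already settled regime for the conjugate exponents $(p',r')$, which satisfy $r'<p'$.

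For the weak-type bound in part~(2) I would run the Fefferman--Stein weak $(1,1)$ scheme adapted to $\mu_\kappa$. Setting $F=(\sum_n|f_n|^r)^{1/r}\in L^1(\mu_\kappa)$, a Calder\'on--Zygmund decomposition at height $\lambda$ produces a disjoint family of balls with union $\Omega$ satisfying $\mu_\kappa(\Omega)\leqslant C\lambda^{-1}\|F\|_{\kappa,1}$, and a splitting $f_n=g_n+b_n$ with a good part $G=(\sum_n|g_n|^r)^{1/r}$ bounded by $C\lambda$ and with $\|G\|_{\kappa,1}\leqslant C\|F\|_{\kappa,1}$. The good part is controlled through the strong $(2,2)$ vector-valued inequality of part~(1) and Chebyshev's inequality, since $\|G\|_{\kappa,2}^2\leqslant C\lambda\|F\|_{\kappa,1}$; the bad part is handled on the complement of a fixed dilate $\Omega^\ast$ of $\Omega$, where the localization of the $b_n$ and the doubling of $\mu_\kappa$ supply the required decay, while $\mu_\kappa(\Omega^\ast)\leqslant C\mu_\kappa(\Omega)$ absorbs the rest. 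Summing the two contributions yields the claimed bound with $C=C(\kappa_1,\dots,\kappa_d,r)$.

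The functional-analytic skeleton above is the classical Fefferman--Stein template and presents no real difficulty once one works on a space of homogeneous type. The genuine obstacle, and the place where the $\mathbb Z_2^d$ hypothesis is indispensable, is the harmonic-analytic input of the first paragraph: establishing the nonnegativity and the sharp pointwise bounds for the Dunkl translation of $\chi_{_{B_r}}$, and deducing from them both the doubling of $\mu_\kappa$ and the pointwise domination $M_\kappa f\leqslant C\,\mathcal Mf$. This is exactly where the absence of a usable translation structure for general reflection groups, alluded to in the introduction, would block the argument, and it is what confines the complete result to $G\simeq\mathbb Z_2^d$.
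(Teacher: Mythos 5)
Your functional-analytic skeleton coincides with the paper's: the paper also reduces everything to a Hardy--Littlewood-type operator adapted to the $\mathbb Z_2^d$ orbits --- its $M_\kappa^R$, a supremum of averages over the sets $\{y:\tilde y\in R(x,r)\}$ with $R(x,r)=\prod_{j=1}^d\bigl[\max\{0,|x_j|-r\},|x_j|+r\bigr[$, which is exactly your orbit-distance ball up to constants --- and then feeds the covering lemma, the weak $(1,1)$ and strong $(p,p)$ bounds, the weighted inequality $\int(M_\kappa^Rf)^qW\,\mathrm d\mu_\kappa\leqslant C\int|f|^qM_\kappa^RW\,\mathrm d\mu_\kappa$ and a Calder\'on--Zygmund decomposition into the original Fefferman--Stein argument. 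That part of your proposal is sound (your duality treatment of the regime $p<r$ is an acceptable variant of interpolating the vector-valued weak $(1,1)$ with the trivial case $p=r$), and it is also the part the paper itself dispatches by reference to \cite{fs}.

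The gap is in your first paragraph, i.e.\ precisely at the point you yourself flag as the genuine obstacle: the domination $M_\kappa f\leqslant C\mathcal Mf$ does \emph{not} follow from ``a nonnegative product formula for the translation of radial functions'' together with doubling. Positivity of $\tau^\kappa_x(\chi_{_{B_r}})$, its support inside the orbit ball, and the mass identity $\int_{\mathbb R^d}\tau^\kappa_x(\chi_{_{B_r}})\,\mathrm d\mu_\kappa=\mu_\kappa(B_r)$ only say that $\mu_\kappa(B_r)^{-1}\bigl(|f|*_\kappa\chi_{_{B_r}}\bigr)(x)$ is an average of $|f|$ against a probability density carried by the orbit ball; without an $L^\infty$ bound on that density this gives nothing resembling $\mathcal Mf(x)$. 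What is actually required, and what occupies most of Section 3, is: (i) the positivity of $\tau^\kappa_x(\chi_{{}_{\overline{Q}_r}})$ on $\mathbb R^d_{\,\mathrm{reg}}$, which is not automatic since $\tau^\kappa_x$ is not a positive operator and is obtained by regularizing with the heat kernel and identifying the representing measure $\nu^{\kappa,+}_{x,y}$ (Lemma~\ref{cle}); (ii) the domination of the ball by the cube, $\tau^\kappa_x(\chi_{{}_{\overline{B}_r}})\leqslant\tau^\kappa_x(\chi_{{}_{\overline{Q}_r}})$, because the translation tensorizes over cubes, not over balls; and (iii) the sharp one-dimensional bound $\bigl|\tau^{\kappa_j}_{x_j}(\chi_{_{[-r,r]}})(y_j)\bigr|\leqslant C\,\mu_{\kappa_j}\bigl(]-r,r[\bigr)/\mu_{\kappa_j}\bigl(I(x_j,r)\bigr)$ of Proposition~\ref{abdelkefisifi}, which comes from estimating R\"osler's explicit kernel $K_\kappa$ on its support and is the one genuinely quantitative input. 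Until (i)--(iii) are supplied, the reduction to $\mathcal M$ --- and with it the whole proof --- is unsupported.
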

The proof of Theorem 1.1 is mainly based on a maximal theorem, a Calder\'on-Zygmund decomposition and a weighted inequality. Nevertheless, the Dunkl maximal operator cannot be treated by this method even if a maximal theorem has been established for this one in \cite{tx}. This is closely related to the fact that a theory of singular integrals associated with the Dunkl transform seems to be out of reach at the moment.
\newline
\indent In order to bypass this problem, we will construct a weighted maximal operator~$M^R_\kappa$ of Hardy-Littlewood type which satisfies the classical Fefferman-Stein inequalities and which controls $M_\kappa$ in the sense that for every $x \in \mathbb R^d_{\,\text{reg}}$
\begin{equation} \label{eq1}
M_\kappa f(x)\leqslant CM^R_\kappa f(x),
\end{equation}
where $C$ is a positive constant independent of $x$ and $f$ and where we set
\[
\mathbb R^d_{\,\text{reg}}=\mathbb R^d\setminus \bigcup_{j=1}^d\bigl\{x=(x_1,\ldots,x_d) \in \mathbb R^d: x_j=0\bigr\}.
\]
\indent The paper is organised as follows. 
\newline
In the next section, we collect some definitions and results related to Dunkl's analysis. In particular, we list the properties of the Dunkl transform (and the associated tools) which will be relevant for the sequel.
\newline
Section 3 is devoted to the proof of Theorem \ref{mainresult}. In view of this, we will prove the inequality \eqref{eq1} thanks to a more convenient Dunkl maximal operator $M_\kappa^Q$ and we will explain why the classical Fefferman-Stein inequalities hold for the operator~$M_\kappa^R$. Therefore, there will be nothing more to do to conclude that Theorem \ref{mainresult} is true.
\newline
An application of our Fefferman-Stein inequalities is given in Section 4.
\newline
\indent Throughout this paper, $C$ denotes a positive constant, which depends only on fixed parameters, and whose value may vary from line to line.
\section{Preliminaries}
This section is devoted to the preliminaries and background. These concern in particular the intertwining operator, the Dunkl transform, the Dunkl translation and the Dunkl convolution. We restrict the statement from Dunkl's analysis to the special case considered in this article. For a large survey about this theory, the reader may especially consult \cite{dJ, d2, ro5, ro4, tx, tr}.
\newline
\indent Let $e_1,\ldots,e_d$ be the standard basis of $\mathbb R^d$. We denote by $\sigma_j$ (for each $j$ from $1$ to $d$) the reflection with respect to the hyperplane perpendicular to $e_j$, that is to say for every $x=(x_1,\ldots,x_d) \in \mathbb R^d$
\[
\sigma_j(x)=x-2\tfrac{\left\langle x,e_j\right\rangle }{\|e_j\|^2}e_j=(x_1,\ldots,x_{j-1},-x_j,x_{j+1},\ldots,x_d).
\]
Of course $\mathopen\langle \cdot{,}\cdot \mathclose\rangle$ is the usual inner product on $\mathbb R^d\times\mathbb R^d$ and $\mathopen\|\cdot\mathclose\|$ is the associated norm. Let G be the finite reflection group generated by $\{\sigma_j: j=1,\ldots,d\}$, so $G$ is isomorphic to $\mathbb Z_2^d$. Let $\kappa_1,\kappa_2,\ldots,\kappa_d$ be nonnegative real numbers. \newline
\indent Associated with these objects are the Dunkl operators $\mathcal D_k$ (for $k=1,\ldots,d$) which have been introduced in \cite{d3} by C.~{F. Dunkl.} They are given for $x \in \mathbb R^d$ by
\[
\mathcal D_kf(x)=\partial_kf(x)+\sum_{j=1}^d\kappa_j\frac{f(x)-f\bigl(\sigma_j(x)\bigr)}{\left\langle x,e_j\right\rangle }\left\langle e_k,e_j\right\rangle=\partial_kf(x)+\kappa_k\frac{f(x)-f\bigl(\sigma_k(x)\bigr)}{x_k},
\]  
where $\partial_k$ denotes the usual partial derivative. A fundamental property of these differential-difference operators is their commutativity, that is to say $\mathcal D_k\mathcal D_l=\mathcal D_l\mathcal D_k$. 
\newline
\indent Closely related to them is the so-called intertwining operator $V_\kappa$ (the subscript means that the operator depends on the parameters $\kappa_j$, except in the rank-one case where the subscript is then a single parameter) which is the unique linear isomorphism of $\mathcal \bigoplus_{n\geqslant0}\mathcal P_n$ such that
\[
V_\kappa(\mathcal P_n)=\mathcal P_n,\quad V_\kappa(1)=1,\quad \mathcal D_kV_\kappa=V_\kappa\partial_k\ \, \text{for}\ \, k=1,\ldots,d,
\]
with $\mathcal P_n$ the subspace of homogeneous polynomials of degree $n$ in $d$ variables. Even if the positivity of the intertwining operator has been established in \cite{ro2} by M.~{Rösler}, an explicit formula of $V_\kappa$ is not known in general. However, in our setting, the operator $V_\kappa$ is given according to \cite{xuin} by the following integral representation
\[
V_\kappa f(x)=\int_{[-1,1]^d}f(x_1t_1,\ldots,x_dt_d)\prod_{j=1}^dM_{\kappa_j}(1+t_j)(1-t_j^2)^{\kappa_j-1}\,\mathrm{d}t,
\]
with $M_{\kappa_j}=\tfrac{\Gamma(\kappa_j+\frac{1}{2})}{\Gamma(\kappa_j)\Gamma(\frac{1}{2})}$ (where $\Gamma$ is the well-known Gamma function).
\newline
\indent In order to define the Dunkl transform, we also need to introduce the Dunkl kernel $E_\kappa$ which is given for $x \in \mathbb C^d$  by 
\[
E_\kappa(\cdot,x)(y)=V_\kappa\bigl(\mathrm e^{\left\langle \cdot,x\right\rangle }\bigr)(y), \quad y \in \mathbb R^d.
\]
It has a unique holomorphic extension to $\mathbb C^d\times\mathbb C^d$ and it satisfies the following basic properties: $E_\kappa(x,y)=E_\kappa(y,x)$ for $x, y \in \mathbb C^d$,  $E_\kappa(x,0)=1$ for $x \in \mathbb C^d$ and~$|E_\kappa(ix,y)|\leqslant 1$ for $x, y \in \mathbb R^d$. 
Considering the definition of $E_\kappa$ together with the explicit formula for $V_\kappa$ gives us
\[
E_\kappa(x,y)=\prod_{j=1}^dE_{\kappa_j}(x_j,y_j).
\]
In the rank-one case, $E_\kappa$ is explicitly known. More precisely, it is given for both $x$ and $y$ in $\mathbb C$ by
\[
E_\kappa(x,y)=j_{\kappa-\frac{1}{2}}(ixy)+\frac{xy}{2\kappa+1}j_{\kappa+\frac{1}{2}}(ixy),
\]
where $j_\kappa$ is the normalized Bessel function of the first kind and of order $\kappa$ (see~\cite{wa}). 
Moreover, we have a crucial one-dimensional product formula for this kernel. Before formulating it, let us introduce some notations.
\begin{notas}
\begin{enumerate} 
\item
For $x,y,z \in \mathbb R$, we put 
\[\sigma_{x,y,z}=
\begin{cases}
\frac{1}{2xy}(x^2+y^2-z^2)& \text{if } x,y \neq 0, \\
0& \text{if } x=0 \text{ or } y=0,
\end{cases}
\]
as well as
\[
\varrho(x,y,z)=\frac{1}{2}(1-\sigma_{x,y,z}+\sigma_{z,x,y}+\sigma_{z,y,x}).
\]
\item
For $x,y,z>0$, we put
\[
K_\kappa(x,y,z)=2^{2\kappa-2}M_\kappa\frac{\Delta(x,y,z)^{2\kappa-2}}{(xyz)^{2\kappa-1}}\chi_{{}_{[|x-y|,x+y]}}(z),
\]
where  $\Delta(x,y,z)$ denotes the area of the triangle (perhaps degenerated) with sides $x,y,z$.
\end{enumerate}
\end{notas}
With these notations in mind, we can now state the product formula for the Dunkl kernel (this formula has been proved in \cite{ro1} in the more general setting of signed hypergroups).
\begin{pr} \label{productformula}
Let $x, y \in \mathbb R$. 
\begin{enumerate}
\item
For every $\lambda \in \mathbb R$ we have
\[
E_{\kappa}(ix,\lambda)E_{\kappa}(iy,\lambda)=\int_{\mathbb R}E_{\kappa}(i\lambda,z)\,\mathrm{d}\nu^\kappa_{x,y}(z),
\]
where the measure $\nu^\kappa_{x,y}$ is given by
\[
\mathrm d\nu^\kappa_{x,y}(z)=
\begin{cases}
\mathcal K_\kappa(x,y,z)\,\mathrm{d}\mu_\kappa(z)& \text{if } x,y \neq 0, \\
\mathrm d\delta_x(z)& \text{if } y=0, \\
\mathrm d\delta_y(z)& \text{if } x=0,
\end{cases}
\]
with 
\[
\mathcal K_\kappa(x,y,z)=K_\kappa\bigl(|x|,|y|,|z|\bigr)\varrho(x,y,z).
\]
\item The measure $\nu^\kappa_{x,y}$ satisfies
\begin{enumerate}
\item $\mathrm{supp}\,\nu^\kappa_{x,y}=\Bigl[-|x|-|y|,-\bigl||x|-|y|\bigr|\Bigr]\bigcup\Bigl[\bigl||x|-|y|\bigr|,|x|+|y|\Bigr]$ for $x, y \neq 0$.
\item $\nu^\kappa_{x,y}(\mathbb R)=1$ and $\|\nu^\kappa_{x,y}\|\leqslant 4$, for $x, y \in \mathbb R$.
\end{enumerate}
\end{enumerate}
\end{pr}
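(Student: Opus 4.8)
The plan is to reduce the proof to the one-dimensional Bessel setting and to exploit the explicit rank-one expression for the kernel stated above. Combining that expression with the fact that the normalized Bessel functions $j_{\kappa-\frac12}$ and $j_{\kappa+\frac12}$ are even (they are power series in the square of their argument), one first records the convenient form
\[
E_\kappa(ix,\lambda)=j_{\kappa-\frac12}(x\lambda)+\frac{ix\lambda}{2\kappa+1}\,j_{\kappa+\frac12}(x\lambda),
\]
which displays $E_\kappa(ix,\lambda)$ as an even part $j_{\kappa-\frac12}(x\lambda)$ plus an odd part $\frac{ix\lambda}{2\kappa+1}j_{\kappa+\frac12}(x\lambda)$ in the variable $x$. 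Multiplying the analogous expressions for $E_\kappa(ix,\lambda)$ and $E_\kappa(iy,\lambda)$ yields four terms: an even--even product of two $j_{\kappa-\frac12}$'s, two mixed products of a $j_{\kappa-\frac12}$ and a $j_{\kappa+\frac12}$, and an odd--odd product of two $j_{\kappa+\frac12}$'s. The whole proof consists in recognizing each of these as an integral of $E_\kappa(i\lambda,z)$ against an explicit piece of the measure $\nu^\kappa_{x,y}$. The degenerate cases $x=0$ or $y=0$ are immediate: since $E_\kappa(0,\cdot)\equiv1$ and $E_\kappa$ is symmetric and depends only on the product of its arguments, the identity collapses to $E_\kappa(iy,\lambda)=\int_{\mathbb R}E_\kappa(i\lambda,z)\,\mathrm d\delta_y(z)$, and likewise with the roles of $x$ and $y$ exchanged.

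The main analytic input is the classical Gegenbauer--Sonine product formula for normalized Bessel functions, which in the Hankel normalization with $\alpha=\kappa-\frac12>-\frac12$ reads
\[
j_{\kappa-\frac12}(x\lambda)\,j_{\kappa-\frac12}(y\lambda)=\int_0^{+\infty}j_{\kappa-\frac12}(z\lambda)\,K_\kappa\bigl(|x|,|y|,z\bigr)\,\mathrm d\mu_\kappa(z),
\]
the constant $M_\kappa$ being precisely the normalizing factor that makes this identity hold. The support restriction $z\in\bigl[\,||x|-|y||,|x|+|y|\,\bigr]$ and the weight $\Delta(|x|,|y|,z)^{2\kappa-2}$ in $K_\kappa$ are exactly what the change of variables $z^2=x^2+y^2-2|x||y|\cos\theta$, $\Delta=\frac12|x||y|\sin\theta$, $z\,\mathrm dz=|x||y|\sin\theta\,\mathrm d\theta$ extracts from the trigonometric form of Gegenbauer's formula, since it turns the integrand into a multiple of $(\sin\theta)^{2\kappa-1}$. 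I would then handle the two mixed products and the odd--odd product by the same device, using the contiguity relations between $j_{\kappa-\frac12}$ and $j_{\kappa+\frac12}$ (equivalently, differentiating the even--even identity in $\lambda$ and integrating by parts in $z$) to bring every term back to an integral against $E_\kappa(i\lambda,z)$. Sorting the four contributions according to their parity in $z$ is what produces the factor $\varrho(x,y,z)=\frac12\bigl(1-\sigma_{x,y,z}+\sigma_{z,x,y}+\sigma_{z,y,x}\bigr)$: the even-in-$z$ part $\frac12(1-\sigma_{x,y,z})$ comes from the even--even and odd--odd products, while the odd-in-$z$ part $\frac12(\sigma_{z,x,y}+\sigma_{z,y,x})$ records the two mixed products and forces the measure to live on both signs of $z$.

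The verification of part (2) is comparatively soft. Setting $\lambda=0$ in part (1) and using $E_\kappa(\cdot,0)\equiv1$ gives $\nu^\kappa_{x,y}(\mathbb R)=1$ at once. The support statement (2a) is read off from the characteristic function $\chi_{[\,||x|-|y||,|x|+|y|\,]}$ in $K_\kappa$, that is, from the triangle inequality for the sides $|x|,|y|,|z|$, together with the fact that $\varrho$ makes the kernel genuinely depend on the sign of $z$, so that both the positive and the negative arcs occur. For the bound $\|\nu^\kappa_{x,y}\|\leqslant4$, I would observe that on the support each of $\sigma_{x,y,z}$, $\sigma_{z,x,y}$, $\sigma_{z,y,x}$ equals, up to a sign, the cosine of an angle of the triangle with sides $|x|,|y|,|z|$, hence has modulus at most $1$; this yields $|\varrho|\leqslant2$. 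Since $K_\kappa\geqslant0$ there and $K_\kappa(|x|,|y|,|z|)$ is even in $z$, the one-sided normalization just established gives $\int_{\mathbb R}K_\kappa\,\mathrm d\mu_\kappa=2$, whence the total variation is at most $2\cdot2=4$.

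The hard part will be the bookkeeping of the three non-even terms. The mixed products involve Bessel functions of \emph{different} orders, so Gegenbauer's formula does not apply directly; one must either invoke a product formula for $j_{\kappa-\frac12}\,j_{\kappa+\frac12}$ or, more systematically, differentiate the even--even identity in $\lambda$ and integrate by parts in $z$, keeping careful track of the boundary terms at $z=||x|-|y||$ and $z=|x|+|y|$ where $\Delta$ vanishes. Checking that all these manipulations reassemble exactly into the single weight $\mathcal K_\kappa(x,y,z)=K_\kappa(|x|,|y|,|z|)\varrho(x,y,z)$, with no leftover terms and with the advertised constants, is where essentially all the work lies; this is the computation carried out in the signed-hypergroup framework of \cite{ro1}, whose route I would follow.
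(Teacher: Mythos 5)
The paper does not actually prove this proposition: it states it and refers to R\"osler's work \cite{ro1}, where the formula is established in the framework of signed hypergroups. So there is no in-paper argument to compare yours against; what can be judged is whether your sketch would stand on its own. The parts you carry out in full are correct: the reduction of the degenerate cases $x=0$ or $y=0$ via $E_\kappa(0,\cdot)\equiv1$ and the dependence of the rank-one kernel on the product of its arguments; the derivation of $\nu^\kappa_{x,y}(\mathbb R)=1$ by setting $\lambda=0$; the support statement from the factor $\chi_{[\,||x|-|y||,\,|x|+|y|\,]}$; and the bound $\|\nu^\kappa_{x,y}\|\leqslant4$ from $|\varrho|\leqslant2$ on the support together with the evenness of $K_\kappa(|x|,|y|,|z|)$ in $z$ and the normalization $\int_0^\infty K_\kappa\,\mathrm d\mu_\kappa=1$. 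The even--odd decomposition of $E_\kappa(ix,\lambda)$ and the identification of the even--even product with the Gegenbauer--Sonine formula (including the change of variables that produces $\Delta^{2\kappa-2}$ and the power $(xyz)^{1-2\kappa}$) are also sound, and your parity bookkeeping correctly predicts that the even-in-$z$ part of $\varrho$ must absorb the even--even and odd--odd products while the odd-in-$z$ part absorbs the mixed ones.

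The genuine gap is exactly the one you flag yourself: the odd--odd product $j_{\kappa+\frac12}(x\lambda)j_{\kappa+\frac12}(y\lambda)$ and the two mixed products involve Bessel functions whose order differs from the $\kappa-\frac12$ appearing in $E_\kappa(i\lambda,z)$, so Gegenbauer's formula does not apply to them directly, and the claim that differentiating in $\lambda$ and integrating by parts in $z$ reassembles these three terms into precisely $K_\kappa(|x|,|y|,|z|)\bigl(-\tfrac12\sigma_{x,y,z}+\tfrac12\sigma_{z,x,y}+\tfrac12\sigma_{z,y,x}\bigr)\,\mathrm d\mu_\kappa(z)$, with vanishing boundary contributions at the endpoints where $\Delta=0$ (which requires $\kappa>\tfrac12$ for the naive argument, the remaining range being reached by analytic continuation in $\kappa$), is asserted rather than carried out. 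Since this computation is the entire content of part (1), deferring it to \cite{ro1} leaves your argument in essentially the same position as the paper's: a correct statement supported by a citation, here dressed with a correct but incomplete roadmap.
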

We are now in a position to introduce the Dunkl transform which is taken with respect to the measure $\mu_\kappa$ defined by \eqref{measu}. For $f \in L^1(\mu_\kappa)$, the Dunkl transform of $f$, denoted by $\mathcal F_\kappa(f)$, is given by
\[
\mathcal F_\kappa(f)(x)=c_\kappa\int_{\mathbb R^d}f(y)E_\kappa(x,-iy)\,\mathrm d\mu_\kappa(y), \quad x \in \mathbb R^d,
\]
where $c_\kappa$ is the following  constant
\[
c_\kappa^{-1}=\int_{\mathbb R^d}\mathrm{e}^{-\frac{\|x\|^2}{2}}\,\mathrm{d}\mu_\kappa(x)=\prod_{j=1}^dc_{\kappa_j}^{-1}.
\] 
If $\kappa_	1=\cdots=\kappa_d=0$, then $V_\kappa=\text{id}$ and the Dunkl transform coincides with the Euclidean Fourier transform. In the rank-one case,  it is more or less a Hankel transform (see~\cite{wa}). The following proposition (see \cite{dJ}) gives us a Plancherel theorem and an inversion formula.
\begin{pr} \label{plancherel}
\begin{enumerate}
\item The Dunkl transform extends uniquely to an isometric isomorphism of $L^2(\mu_\kappa)$.
\item If both $f$ and $\mathcal F_\kappa(f)$ are in $L^1(\mu_\kappa)$ then 
\[
f(x)=c_\kappa\int_{\mathbb R^d}\mathcal F_\kappa(f)(y)E_\kappa(ix,y)\,\mathrm d\mu_\kappa(y).
\]
\end{enumerate}
\end{pr}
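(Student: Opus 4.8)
The plan is to exploit the product structure that the group $\mathbb Z_2^d$ confers on every object at hand, and thereby reduce everything to the one-dimensional case. Since $h_\kappa(x)=\prod_{j=1}^d h_{\kappa_j}(x_j)$, the measure factors as a tensor product $\mathrm d\mu_\kappa=\bigotimes_{j=1}^d\mathrm d\mu_{\kappa_j}$, the normalising constant splits as $c_\kappa=\prod_{j=1}^d c_{\kappa_j}$, and the kernel obeys $E_\kappa(x,-iy)=\prod_{j=1}^d E_{\kappa_j}(x_j,-iy_j)$. Consequently the Dunkl transform itself factors as the tensor product $\mathcal F_\kappa=\mathcal F_{\kappa_1}\otimes\cdots\otimes\mathcal F_{\kappa_d}$ of one-dimensional Dunkl transforms. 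By Fubini's theorem a tensor product of one-dimensional isometries is again an isometry (and likewise for the inversion formulae), so it suffices to prove both assertions in the rank-one case $d=1$.

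In dimension one I would diagonalise the transform on the parity decomposition $L^2(\mu_\kappa)=L^2_{\mathrm{even}}\oplus L^2_{\mathrm{odd}}$. Inserting the explicit rank-one formula, one sees that $E_\kappa(x,-iy)$ couples the even part of $f$ to the Bessel function $j_{\kappa-\frac12}$ and the odd part of $f$ to $j_{\kappa+\frac12}$. Thus, up to explicit constants, $\mathcal F_\kappa$ acts as a Hankel transform of order $\kappa-\tfrac12$ on $L^2_{\mathrm{even}}$ and of order $\kappa+\tfrac12$ on $L^2_{\mathrm{odd}}$; the weight $|x|^{2\kappa}\,\mathrm dx$ matches the Hankel measure exactly once the factor $xy$ of the odd term is absorbed, which is precisely what shifts the order from $\kappa-\tfrac12$ to $\kappa+\tfrac12$. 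Since the Hankel transform of order $\nu\geqslant-\tfrac12$ is a classical $L^2$-isometric involution carrying its own inversion formula (see \cite{wa}), both the Plancherel identity and the inversion formula follow on each parity component, and reassembling them gives assertion (1). Surjectivity, hence the word \emph{isomorphism}, comes for free from the involutivity, which translates into $\mathcal F_\kappa^2$ being the parity operator $f\mapsto f(-\cdot)$; undoing the tensor reduction promotes this to the $d$-dimensional statement.

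For assertion (2), I would upgrade the $L^2$-inversion to the stated pointwise identity. When both $f$ and $\mathcal F_\kappa f$ lie in $L^1(\mu_\kappa)$, the right-hand integral converges absolutely because $|E_\kappa(ix,y)|\leqslant 1$, so it defines a bounded continuous function of $x$; note that $E_\kappa(ix,y)=\overline{E_\kappa(x,-iy)}$ for real $x,y$, which is exactly the adjoint kernel one expects once $\mathcal F_\kappa$ is known to be unitary. This candidate coincides with $f$ in the $L^2$ sense by the inversion already established on a dense subspace, for instance the Schwartz class, on which $\mathcal F_\kappa$ behaves well thanks to the intertwining relations $\mathcal F_\kappa(\mathcal D_j f)=iy_j\,\mathcal F_\kappa f$ and $\mathcal F_\kappa(x_jf)=i\,\mathcal D_j\mathcal F_\kappa f$. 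A routine approximation, with the passage to the limit justified by dominated convergence against the uniform bound on $E_\kappa$, then identifies the two functions almost everywhere.

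The main obstacle is the faithful conversion of the one-dimensional Dunkl transform into Hankel transforms on the two parity subspaces: one must track the Bessel normalisations, the factors of $i$ produced by the substitution $y\mapsto -iy$, and the precise matching of the weight $|x|^{2\kappa}\,\mathrm dx$ with the Hankel measure, and verify that the even/odd splitting is genuinely compatible with the two Hankel inversion orders. If instead one prefers an argument internal to Dunkl's analysis, the spectral route via the generalised Hermite functions is available: they form an orthonormal basis of $L^2(\mu_\kappa)$ and are eigenfunctions of $\mathcal F_\kappa$ with eigenvalues $(-i)^{|\nu|}$, whence the isometry and the relation $\mathcal F_\kappa^2=$ parity follow at once. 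In that approach the hard point migrates to proving the completeness of the Hermite system, i.e. the density of Gaussian-times-polynomial in $L^2(\mu_\kappa)$.
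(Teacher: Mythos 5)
The paper does not prove this proposition at all: it is imported from de Jeu \cite{dJ}, where the Plancherel theorem and the $L^1$-inversion formula are established for the Dunkl transform attached to an \emph{arbitrary} finite reflection group with nonnegative multiplicity, by methods that make no use of any product structure. Your argument is therefore necessarily a different route, and it is a legitimate and essentially standard one for the special case $G\simeq\mathbb Z_2^d$ treated here: the factorisations $\mathrm d\mu_\kappa=\bigotimes_{j}\mathrm d\mu_{\kappa_j}$, $c_\kappa=\prod_j c_{\kappa_j}$ and $E_\kappa(x,y)=\prod_j E_{\kappa_j}(x_j,y_j)$ are all recorded in Section 2 and do reduce everything to rank one, and your parity decomposition is exactly the classical identification of the rank-one Dunkl transform with a pair of Hankel transforms of orders $\kappa\mp\tfrac12$ (the odd part $f(y)=yg(y)$ being sent, up to an explicit constant, to $-ix$ times the order-$(\kappa+\tfrac12)$ Hankel transform of $g$ against $|y|^{2\kappa+2}\,\mathrm dy$, which is precisely what makes $\mathcal F_\kappa^2$ the parity operator). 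What your route buys is elementariness, since everything then rests on Watson's Hankel inversion theorem for orders $\nu\geqslant-\tfrac12$ (and $\kappa\geqslant0$ keeps you in that range); what de Jeu's route buys is generality beyond $\mathbb Z_2^d$. Two points to tighten if you write this out: (i) the tensor-product step needs the density in $L^2(\mu_\kappa)$ of finite linear combinations of products $f_1(x_1)\cdots f_d(x_d)$, which is true but should be stated; (ii) in part (2) the ``routine approximation'' should be made concrete --- the clean argument is the multiplication formula $\int\mathcal F_\kappa (f)\, g\,\mathrm d\mu_\kappa=\int f\,\mathcal F_\kappa(g)\,\mathrm d\mu_\kappa$, valid for $f,g\in L^1(\mu_\kappa)$ by Fubini and $|E_\kappa(ix,y)|\leqslant1$, tested against a Gaussian approximate identity (the heat kernel $q_\kappa^t$ of Section 3 is tailor-made for this), which identifies the absolutely convergent integral with $f$ almost everywhere without any detour through $L^2$.
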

The Dunkl transform shares many other properties with the Fourier transform. Therefore, it is natural to associate a generalized translation operator and a generalized convolution operator with this transform.
\newline
\indent There are many ways to define the Dunkl translation. We use the definition which most underlines the analogy with the Fourier transform. It is the definition given in \cite{tx} with a different convention. 
\newline 
Let $x \in \mathbb R^d$. The Dunkl translation operator $\tau^\kappa_x$ is given for $f \in L^2(\mu_\kappa)$ by
\[
\mathcal F_\kappa\bigl(\tau^\kappa_x(f)\bigr)(y)=E_\kappa(ix,y)\mathcal F_\kappa(f)(y), \quad y \in \mathbb R^d.
\]
It plays the role of $f\mapsto f(\cdot+x)$ in Fourier analysis. It is important to note that it is not a positive operator. The following explicit formula for $\tau^\kappa_x$ is due to Rösler~(see~\cite{ro1}). In the case $G\simeq\mathbb Z_2$, we have for a continuous function $f$ on $\mathbb R$ and for $x, y \in \mathbb R$
\begin{multline} \label{explicite}
\tau^\kappa_x(f)(y)=\frac{1}{2}\int_{-1}^1f\Bigl(\sqrt{x^2+y^2+2xyt}\Bigr)\biggl(1+\frac{x+y}{\sqrt{x^2+y^2+2xyt}}\biggr)\Phi_\kappa(t)\,\mathrm{d}t
\\ +\frac{1}{2}\int_{-1}^1f\Bigl(-\sqrt{x^2+y^2+2xyt}\Bigr)\biggl(1-\frac{x+y}{\sqrt{x^2+y^2+2xyt}}\biggr)\Phi_\kappa(t)\,\mathrm{d}t,
\end{multline}
where $\Phi_\kappa(t)=M_\kappa(1+t)(1-t^2)^{\kappa-1}$. It follows from \eqref{explicite} a formula for $\tau^\kappa_x$ in the case $G\simeq\mathbb Z_2^d$ and this formula implies the boundedness of $\tau^\kappa_x$ (it is still a challenging problem for a general reflection group).
\begin{pr} \label{bornitude}
Let $x \in \mathbb R^d$. The operator $\tau^\kappa_x$ extends to $L^p(\mu_\kappa)$ for $p \in [1,+\infty]$ and for $f \in L^p(\mu_\kappa)$ we have
\[
\bigl\|\tau^\kappa_x(f)\bigr\|_{\kappa,p}\leqslant C\|f\|_{\kappa,p},
\]
where $C$ is independent of $x$ and $f$.
\end{pr}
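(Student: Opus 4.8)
The plan is to reduce the statement to the rank-one case $G\simeq\mathbb Z_2$ and then to establish the two endpoint estimates for $\tau^\kappa_x$ on $L^1(\mu_\kappa)$ and on $L^\infty(\mu_\kappa)$, the full range $p\in[1,+\infty]$ being obtained by interpolation. The reduction works because the Dunkl kernel factorises, $E_\kappa(x,y)=\prod_j E_{\kappa_j}(x_j,y_j)$, and $\mu_\kappa$ is the product of the one-dimensional measures $\mathrm d\mu_{\kappa_j}(x_j)=|x_j|^{2\kappa_j}\,\mathrm dx_j$. Consequently $\tau^\kappa_x=\tau^{\kappa_1}_{x_1}\otimes\cdots\otimes\tau^{\kappa_d}_{x_d}$ acts coordinatewise, and a uniform one-dimensional bound $C_j$ for each factor yields the bound $\prod_j C_j$ for $\tau^\kappa_x$ on $L^p(\mu_\kappa)$ by iterating the estimate (Fubini/Minkowski in the remaining variables). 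It therefore suffices to treat $d=1$.

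For the $L^\infty$-bound in rank one I would read the estimate directly off the explicit formula \eqref{explicite}. Writing $a_t=(x+y)/\sqrt{x^2+y^2+2xyt}$ and using $|1+a_t|+|1-a_t|=2\max(1,|a_t|)$, one gets
\[
|\tau^\kappa_x f(y)|\leqslant \|f\|_{\kappa,\infty}\cdot\frac{1}{2}\int_{-1}^1\bigl(|1+a_t|+|1-a_t|\bigr)\Phi_\kappa(t)\,\mathrm{d}t.
\]
The integral on the right is exactly the total variation of the representing measure $\nu^\kappa_{x,y}$ of Proposition \ref{productformula}, which is bounded by $4$ according to Proposition \ref{productformula}(2)(b). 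The apparent singularity of $a_t$ as $t\to-1$, where $\sqrt{x^2+y^2+2xyt}\to|x-y|$, is harmless because $\Phi_\kappa(t)=M_\kappa(1+t)(1-t^2)^{\kappa-1}$ carries the vanishing factor $1+t$. Hence $\|\tau^\kappa_x f\|_{\kappa,\infty}\leqslant 4\|f\|_{\kappa,\infty}$, uniformly in $x$.

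For the $L^1$-bound I would argue by duality. Using that $\mathcal F_\kappa$ is an $L^2$-isometry (Proposition \ref{plancherel}), that $\mathcal F_\kappa(\tau^\kappa_x f)=E_\kappa(ix,\cdot)\mathcal F_\kappa(f)$, and that $\overline{E_\kappa(ix,\lambda)}=E_\kappa(-ix,\lambda)$ for real $x,\lambda$, one obtains the adjoint relation $\langle\tau^\kappa_x f,g\rangle_{\mu_\kappa}=\langle f,\tau^\kappa_{-x}g\rangle_{\mu_\kappa}$ on a dense subspace. Since $(L^1)^*=L^\infty$, this gives $\|\tau^\kappa_x\|_{L^1\to L^1}=\|\tau^\kappa_{-x}\|_{L^\infty\to L^\infty}\leqslant 4$ by the previous step applied at the point $-x$. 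Alternatively, one may avoid duality and estimate directly: from $\tau^\kappa_x f(y)=\int_{\mathbb R} f\,\mathrm d\nu^\kappa_{x,y}$ one bounds $\|\tau^\kappa_x f\|_{\kappa,1}$ by $\int\!\int|f(z)|\,\mathrm d|\nu^\kappa_{x,y}|(z)\,\mathrm d\mu_\kappa(y)$, applies Fubini, and reduces to the uniform estimate $\int_{\mathbb R}|\mathcal K_\kappa(x,y,z)|\,\mathrm d\mu_\kappa(y)\leqslant C$, which follows from $|\varrho|\leqslant 2$ on the support together with the symmetry of the Bessel-type kernel $K_\kappa(|x|,|y|,|z|)$ under permutations of its arguments. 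With both endpoints in hand, the Riesz--Thorin interpolation theorem applied to the linear operator $\tau^\kappa_x$ yields $\|\tau^\kappa_x f\|_{\kappa,p}\leqslant 4\|f\|_{\kappa,p}$ for every $p\in[1,+\infty]$, still uniformly in $x$; tensorising back over the $d$ coordinates produces the claim with $C=4^d$.

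The main obstacle is the $L^1$-estimate. In the duality route the delicate point is the rigorous justification of the adjoint identity: the conjugation symmetry of $E_\kappa$, the density argument transferring the $L^2$-Parseval pairing to the $L^1$--$L^\infty$ duality, and the observation that $\tau^\kappa_{-x}$ is again covered by the $L^\infty$ step. In the direct route the difficulty is the Fubini interchange against the signed measure together with the change of variables $z=\sqrt{x^2+y^2+2xyt}$, which must be arranged so that the singular weight $|y|^{2\kappa}$ recombines with the triangle factor $\Delta(|x|,|y|,|z|)^{2\kappa-2}$ to give a bound independent of $x$. By contrast, the $L^\infty$-bound and the interpolation step are essentially immediate once Proposition \ref{productformula}(2)(b) is available.
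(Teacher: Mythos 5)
Your proposal is correct and follows essentially the route the paper itself indicates: the paper does not prove Proposition \ref{bornitude} in detail but derives it from the tensorisation of the explicit rank-one formula \eqref{explicite}, i.e.\ from the fact that $\tau^\kappa_x$ is given by integration against a measure of total variation at most $4$ in each coordinate, which is exactly the content of your endpoint bounds. Your added structure (the $L^\infty$ bound via $\|\nu^\kappa_{x,y}\|\leqslant 4$, the $L^1$ bound by duality through the adjoint relation $\langle\tau^\kappa_x f,g\rangle_{\mu_\kappa}=\langle f,\tau^\kappa_{-x}g\rangle_{\mu_\kappa}$, and Riesz--Thorin in between) is a sound and complete way of carrying out what the paper leaves implicit.
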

The last result we mention about the generalized translation is the following one-dimensional inequality which has been recently proved by C. Abdelkefi and M.~Sifi in \cite{as} (see also \cite{bxu}).
\begin{pr} \label{abdelkefisifi}
There exists a positive constant $C$ such that for $x, y \in \mathbb R$ and for every $r>0$ we have
\[
\bigl|\tau^\kappa_x(\chi_{_{[-r,r]}})(y)\bigr|\leqslant C\,\frac{\mu_\kappa\bigl(]-r,r[\bigr)}{\mu_\kappa\bigl(I(x,r)\bigr)},
\]
where we denote by $I(x,r)$ the following set
\[
I(x,r)=\bigl[\max\{0;|x|-r\},|x|+r\bigr[.
\]
\end{pr}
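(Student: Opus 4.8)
My plan is to reduce the statement to a single explicit integral and then estimate that integral against the two measures. Applying R\"osler's formula \eqref{explicite} to the even function $f=\chi_{_{[-r,r]}}$, the two half-integrals carry the same factor $\chi_{_{[-r,r]}}\bigl(\sqrt{x^2+y^2+2xyt}\bigr)$, and when they are added the singular contributions $\pm\frac{x+y}{\sqrt{x^2+y^2+2xyt}}$ cancel identically. This collapses \eqref{explicite} to
\[
\tau^\kappa_x(\chi_{_{[-r,r]}})(y)=M_\kappa\int_{\{t\in[-1,1]\,:\,x^2+y^2+2xyt\leqslant r^2\}}(1+t)(1-t^2)^{\kappa-1}\,\mathrm dt,
\]
which is nonnegative, so the absolute value plays no role, and which is dominated (since $0\leqslant1+t\leqslant2$) by the same integral taken against the \emph{even} weight $2M_\kappa(1-t^2)^{\kappa-1}$. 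Since this majorant depends on $(x,y)$ only through $(|x|,|y|)$, I may assume $x,y\geqslant0$. The degenerate value $\kappa=0$ (Lebesgue measure, ordinary translation) is elementary and treated apart.

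I first dispose of the easy ranges. If $r<|x-y|$ the integration set is empty and the left-hand side vanishes. If $|x|\leqslant r$, then the crude bound $\int_{-1}^12M_\kappa(1-t^2)^{\kappa-1}\,\mathrm dt=2$ combined with $\mu_\kappa\bigl(]-r,r[\bigr)/\mu_\kappa\bigl(I(x,r)\bigr)\geqslant2^{-2\kappa}$, which follows at once from $\mu_\kappa\bigl(]-r,r[\bigr)=\frac{2r^{2\kappa+1}}{2\kappa+1}$ and $|x|+r\leqslant2r$, already yields the claim. There remains the essential regime $x>r$ and $|x-y|\leqslant r$.

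On this regime I substitute $z=\sqrt{x^2+y^2+2xyt}$, that is $w=z^2$, and use the identity $16\,\Delta(x,y,z)^2=\bigl((x+y)^2-z^2\bigr)\bigl(z^2-(x-y)^2\bigr)$, so that $1-t^2=\bigl((x+y)^2-z^2\bigr)\bigl(z^2-(x-y)^2\bigr)/(2xy)^2$. The majorant then becomes the truncated Beta-type integral
\[
\tau^\kappa_x(\chi_{_{[-r,r]}})(y)\leqslant\frac{M_\kappa}{4^{\kappa-1}(xy)^{2\kappa-1}}\int_{(x-y)^2}^{r^2}\bigl((x+y)^2-w\bigr)^{\kappa-1}\bigl(w-(x-y)^2\bigr)^{\kappa-1}\,\mathrm dw,
\]
in which one recognizes precisely the kernel $K_\kappa$ of the Notations integrated in the variable $z\in[|x-y|,r]$. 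On the other hand $\mu_\kappa\bigl(I(x,r)\bigr)=\frac{(x+r)^{2\kappa+1}-(x-r)^{2\kappa+1}}{2\kappa+1}$, and the elementary bound $(2\kappa+1)2r(x-r)^{2\kappa}\leqslant(x+r)^{2\kappa+1}-(x-r)^{2\kappa+1}\leqslant(2\kappa+1)2r(x+r)^{2\kappa}$ shows that the target quantity $\mu_\kappa\bigl(]-r,r[\bigr)/\mu_\kappa\bigl(I(x,r)\bigr)$ is comparable to $r^{2\kappa}/(x+r)^{2\kappa}$; it therefore suffices to bound the displayed integral by $C\,r^{2\kappa}/(x+r)^{2\kappa}$.

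The crux, and the step I expect to be the main obstacle, is this last estimate. It separates into $\kappa\geqslant1$ and $0<\kappa<1$ according to whether $\bigl((x+y)^2-w\bigr)^{\kappa-1}$ is nonincreasing or nondecreasing in $w$ on $[(x-y)^2,r^2]$; in either case one replaces that factor by its value at the appropriate endpoint and integrates the remaining power. For $\kappa\geqslant1$ this leaves, after simplification, the quantity $\frac{M_\kappa}{\kappa}\bigl((r^2-(x-y)^2)/xy\bigr)^{\kappa}$, in which the factor $r^2-(x-y)^2$ vanishes as $y\to x\pm r$ and so cannot be discarded. Writing $y=x-s$ with $|s|\leqslant r$, the elementary maximisation of
\[
s\longmapsto\frac{(r^2-s^2)(x+r)^2}{r^2\,x(x-s)}
\]
(whose interior critical point is $s_\ast=x-\sqrt{x^2-r^2}$, where the value equals $2s_\ast(x+r)^2/(r^2x)\leqslant2(x+r)^2/x^2\leqslant8$) shows that $\frac{r^2-(x-y)^2}{xy}\leqslant8\,\frac{r^2}{(x+r)^2}$, which is exactly the bound $C\,r^{2\kappa}/(x+r)^{2\kappa}$ sought. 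The range $0<\kappa<1$ is handled by the same scheme, the factor $\bigl((x+y)^2-w\bigr)^{\kappa-1}$ now being estimated at $w=r^2$; this yields a constant $C$ depending only on $\kappa$ and finishes the proof.
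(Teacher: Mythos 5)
The paper does not prove this proposition: it is quoted verbatim from Abdelkefi and Sifi \cite{as} (see also \cite{bxu}), so there is no in-paper argument to compare yours against. Your self-contained derivation from R\"osler's formula \eqref{explicite} is essentially correct and in the same elementary spirit as the cited source: the cancellation of the odd factors for the even function $\chi_{_{[-r,r]}}$, the reduction to $x,y>0$, the dismissal of the ranges $r<|x-y|$ and $|x|\leqslant r$, the substitution $w=x^2+y^2+2xyt$, and the maximisation giving $\frac{r^2-(x-y)^2}{xy}\leqslant 8\,\frac{r^2}{(x+r)^2}$ all check out. Two remarks on sufficiency of what you wrote: first, "comparable" is an overstatement (the two MVT bounds are not uniformly comparable as $x\to r^{+}$), but you only need the one-sided bound $\mu_\kappa\bigl(I(x,r)\bigr)\leqslant 2r(x+r)^{2\kappa}$, so no harm is done; second, \eqref{explicite} is stated for continuous $f$, so applying it to $\chi_{_{[-r,r]}}$ strictly requires an approximation step --- exactly the heat-kernel regularisation the paper carries out to reach \eqref{txchir}, which is the same identity you obtain --- and the resulting equality holds a.e., which suffices here.

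The one step you should not leave as an assertion is the case $0<\kappa<1$. It is not literally "the same scheme": after evaluating $\bigl((x+y)^2-w\bigr)^{\kappa-1}$ at $w=r^2$, the quantity to be bounded is, up to a constant depending only on $\kappa$,
\[
\Bigl(\frac{\bigl(r^2-(x-y)^2\bigr)(x+r)^2}{xy\,r^2}\Bigr)^{\kappa}\,\Bigl(\frac{xy}{(x+y)^2-r^2}\Bigr)^{1-\kappa},
\]
and the second factor must be controlled separately; your $\kappa\geqslant1$ maximisation only handles the first. Fortunately it is controlled: since $x>r$ one has $(x+y)^2-r^2\geqslant(x+y)^2-x^2=y(2x+y)\geqslant 2xy$, so the second factor is at most $2^{\kappa-1}$ while the first is at most $8^{\kappa}$ by your maximisation. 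With that line written out the proof is complete.
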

We conclude this section with the definition and the basic properties of  the Dunkl convolution operator. According to \cite{tx}, this operator is defined for both $f$ and $g$ in $L^2(\mu_\kappa)$ by
\[
(f*_\kappa g)(x)=c_\kappa\int_{\mathbb R^d}f(y)\tau^\kappa_x(g)(-y)\,\mathrm d\mu_\kappa(y), \quad x \in \mathbb R^d.
\] 
Thanks to Proposition \ref{bornitude}, the usual Young's inequality holds (for the proof, see for instance \cite{zy}).
\begin{pr}
Assume that $p^{-1}+q^{-1}=1+r^{-1}$ with $p,q,r \in [1,+\infty]$. Then, the map $(f,g)\mapsto f*_\kappa g$ defined on $L^2(\mu_\kappa)\times L^2(\mu_\kappa)$ extends to a continuous map from $L^p(\mu_\kappa)\times L^q(\mu_\kappa)$ to $L^r(\mu_\kappa)$ and we have 
\[
\|f*_\kappa g\|_{\kappa,r}\leqslant C\|f\|_{\kappa,p}\,\|g\|_{\kappa,q},
\]
where $C$ is independent of  $f$ and $g$.
\end{pr}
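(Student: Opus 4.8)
The plan is to prove the stated inequality first on a dense class and then extend it by density, the inequality itself being obtained by interpolating between two endpoint estimates. The whole argument rests on two structural facts about the objects at hand. First, since $h_\kappa$ is even in each variable, the measure $\mu_\kappa$ is invariant under the map $y\mapsto -y$, so that replacing $g(\cdot)$ by $g(-\cdot)$ costs nothing in any $L^p(\mu_\kappa)$ norm. Second, the Dunkl translation is symmetric in the sense that $\tau^\kappa_x(f)(y)=\tau^\kappa_y(f)(x)$; this follows at once from the inversion formula of Proposition \ref{plancherel}, which gives
\[
\tau^\kappa_x(f)(y)=c_\kappa\int_{\mathbb R^d}E_\kappa(ix,\xi)E_\kappa(iy,\xi)\mathcal F_\kappa(f)(\xi)\,\mathrm d\mu_\kappa(\xi),
\]
an expression manifestly symmetric in $x$ and $y$. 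Combined with Proposition \ref{bornitude}, these two facts allow me to control, for fixed $y$, the $L^q(\mu_\kappa)$ norm in the variable $x$ of $x\mapsto\tau^\kappa_x(g)(-y)=\tau^\kappa_{-y}(g)(x)$, namely by $C\|g\|_{\kappa,q}$, a quantity independent of $y$. This is precisely the surrogate for the translation invariance of Lebesgue measure that the classical proof uses.

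Next I would establish the two endpoint estimates, working with $f,g$ nice enough (say bounded with compact support) for the defining integral of $f*_\kappa g$ to converge absolutely. For $r=+\infty$, so that $q$ equals the conjugate exponent $p'$ of $p$, Hölder's inequality in the variable $y$ gives
\[
\bigl|(f*_\kappa g)(x)\bigr|\leqslant c_\kappa\|f\|_{\kappa,p}\,\bigl\|\tau^\kappa_x(g)(-\cdot)\bigr\|_{\kappa,p'},
\]
and the evenness of $\mu_\kappa$ together with Proposition \ref{bornitude} bounds the last factor by $C\|g\|_{\kappa,p'}$, whence $\|f*_\kappa g\|_{\kappa,\infty}\leqslant C\|f\|_{\kappa,p}\|g\|_{\kappa,p'}$. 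For $p=1$, so that $r=q$, the Minkowski integral inequality applied in the variable $x$ yields
\[
\|f*_\kappa g\|_{\kappa,q}\leqslant c_\kappa\int_{\mathbb R^d}|f(y)|\,\Bigl(\int_{\mathbb R^d}\bigl|\tau^\kappa_x(g)(-y)\bigr|^q\,\mathrm d\mu_\kappa(x)\Bigr)^{\frac1q}\,\mathrm d\mu_\kappa(y),
\]
and the symmetry of $\tau^\kappa$ turns the inner integral into $\|\tau^\kappa_{-y}(g)\|_{\kappa,q}^q\leqslant(C\|g\|_{\kappa,q})^q$, giving $\|f*_\kappa g\|_{\kappa,q}\leqslant C\|f\|_{\kappa,1}\|g\|_{\kappa,q}$.

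Finally I would interpolate. Fixing $g$, the linear operator $S_g\colon f\mapsto f*_\kappa g$ is bounded from $L^1(\mu_\kappa)$ to $L^q(\mu_\kappa)$ and from $L^{q'}(\mu_\kappa)$ to $L^\infty(\mu_\kappa)$, both with norm $\leqslant C\|g\|_{\kappa,q}$. Since $\mu_\kappa$ is $\sigma$-finite, the Riesz-Thorin theorem gives boundedness of $S_g$ from $L^{p_\theta}(\mu_\kappa)$ to $L^{r_\theta}(\mu_\kappa)$ for $\theta\in[0,1]$, with $1/p_\theta=1-\theta+\theta/q'$ and $1/r_\theta=(1-\theta)/q$; a direct check shows $1/p_\theta+1/q=1+1/r_\theta$, and as $\theta$ runs over $[0,1]$ the pair $(p_\theta,r_\theta)$ exhausts exactly the admissible exponents for the given $q$. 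The interpolated norm being again $\leqslant C\|g\|_{\kappa,q}$, this is the desired inequality for nice $f$ and $g$; a standard density argument then extends $(f,g)\mapsto f*_\kappa g$ to all of $L^p(\mu_\kappa)\times L^q(\mu_\kappa)$ with the same bound. The main obstacle is conceptual rather than computational: it lies in replacing the exact translation invariance of the classical convolution by the symmetry of $\tau^\kappa$ and the sole uniform boundedness of Proposition \ref{bornitude}, and in checking that the endpoint operators extend consistently so that both the interpolation and the final density passage are legitimate.
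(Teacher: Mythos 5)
Your proof is correct, and it is precisely the standard argument that the paper delegates to its reference: the paper offers no proof of this proposition beyond the remark that it follows from Proposition~\ref{bornitude} (citing \cite{zy}). The two endpoint estimates ($r=\infty$ by H\"older, $p=1$ by Minkowski's integral inequality) combined with Riesz--Thorin, with the uniform bound on $\tau^\kappa_x$ and the symmetry $\tau^\kappa_x(g)(y)=\tau^\kappa_y(g)(x)$ standing in for translation invariance, is exactly the intended route; the only points worth tightening are that the symmetry identity should first be justified on a dense class where the inversion formula applies (e.g.\ after regularizing by the heat kernel, as the paper does elsewhere) and that the degenerate case $p=\infty$, $q=1$, $r=\infty$ is handled directly by the endpoint bound rather than by density.
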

We finally note that the Dunkl convolution satisfies the  properties $f*_\kappa g=g*_\kappa f$ and $\mathcal F_\kappa(f~{*_\kappa g)=\mathcal F_\kappa(f)\cdot\mathcal F_\kappa(g)}$.
\section{Fefferman-Stein inequalities}
This section is concerned with the proof of our Fefferman-Stein inequalities, that is to say Theorem~\ref{mainresult}. In fact, as we have already claimed, the proof is straightforward once we have constructed an operator $M_\kappa^R$ which controls $M_\kappa$ and which satisfies the classical Fefferman-Stein inequalities. What we have in mind for the construction of  $M_\kappa^R$ is that we want to use the sharp inequality of Proposition~\ref{abdelkefisifi} because it is a key argument to bypass the lack of information on the Dunkl translation operator. Nevertheless, this proposition is one-dimensional. This is the reason for which we shall introduce a Dunkl maximal operator $M_\kappa^Q$ defined with cubes. Indeed, the basic observation  $\chi_{{}_{\overline{Q}_r}}(x)=\prod_{j=1}^d\chi_{{}_{[-r,r]}}(x_j)$ (together with the fact that~$E_\kappa(x,y)=\prod_{j=1}^dE_{\kappa_j}(x_j,y_j)$) will allow us to prove the formula
\[
\tau^\kappa_x(\chi_{{}_{{\overline{Q}}_r}})(y)=\prod_{j=1}^d\tau^{\kappa_j}_{x_j}(\chi_{{}_{[-r,r]}})(y_j),
\]
from which we will deduce not only the definition of the operator $M_\kappa^R$ but also the inequality $M_\kappa^Qf\leqslant M_\kappa^Rf$.
Therefore, in order to prove the inequality \eqref{eq1}, it will be enough to prove that  $M_\kappa^Q$ controls $M_\kappa$. Since $\tau^\kappa_x$ is not a positive operator, it is not at all obvious that they are connected. Thus, we shall study how they are related to each other.
\newline
\indent First of all, we introduce the auxiliary operator $M_\kappa^Q$.
\begin{de}
Let $M^Q_\kappa$ be the Dunkl maximal operator defined with cubes centered at the origin and whose sides are parallel to the axes by 
\[
M^Q_\kappa f(x)=\sup_{r>0}\frac{1}{\mu_\kappa\bigl(Q_r\bigr)}\biggl|\int_{\mathbb R^d}f(y)\tau^\kappa_x(\chi_{{}_{Q_r}})(-y)\,\mathrm{d}\mu_\kappa(y)\biggr|, \quad x \in \mathbb R^d, 
\]
where for every $r>0$ we set $Q_r=\bigl\{x \in \mathbb R^d: |x_j|<r,\ j=1,\ldots,d\bigr\}$. 
\end{de}
Our first aim is to prove that this maximal operator controls $M_\kappa$. In view of this, we need the following lemma. Before stating it, we have to introduce a notation.
\begin{nota}
For $x, y \in \mathbb R\setminus\{0\}$, we denote by $\nu^{\kappa,+}_{x,y}$ the measure given for every $z \in \mathbb R$ by
\[
\mathrm{d}\nu^{\kappa,+}_{x,y}(z)=\frac{1}{2}\,K_{\kappa}\bigl(|x|,|y|,|z|\bigr)(1-\sigma_{x,y,z})\,\mathrm{d}\mu_\kappa(z).
\]
\end{nota}
Let us point out that this measure is positive. Indeed, it is a simple consequence of the following observation
\[
|z| \in \Bigl[\bigl||x|-|y|\bigr|,|x|+|y|\Bigr]\Longrightarrow |\sigma_{x,y,z}|\leqslant 1.
\]
With this notation in mind, we can now formulate the lemma.
\begin{lem} \label{cle}
Let $x=(x_1,\ldots,x_d) \in \mathbb R_{\,\mathrm{reg}}^d$. Then $\tau^\kappa_x(\chi_{{}_{{\overline{Q}}_r}})$ is a positive function on~$\mathbb R_{\,\mathrm{reg}}^d$ and for $y=(y_1,\ldots,y_d) \in \mathbb R_{\,\mathrm{reg}}^d$ we have
\[
\tau^\kappa_x(\chi_{{}_{{\overline{Q}}_r}})(y)=\int_{\mathbb R^d}\chi_{{}_{{\overline{Q}}_r}}(z)\,\mathrm{d}\upsilon_{x,y}(z),
\]
where the measure $\upsilon^\kappa_{x,y}$ is given by
\[
\mathrm{d}\upsilon^\kappa_{x,y}(z)=\mathrm{d}\nu^{\kappa_1,+}_{x_1,y_1}(z_1)\cdots\mathrm{d}\nu^{\kappa_d,+}_{x_d,y_d}(z_d).
\]
\end{lem}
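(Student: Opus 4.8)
The plan is to reduce the $d$-dimensional statement to a product of rank-one computations, using the factorization of the translation operator that the author has already announced. First I would establish the key factorization
\[
\tau^\kappa_x(\chi_{{}_{\overline{Q}_r}})(y)=\prod_{j=1}^d\tau^{\kappa_j}_{x_j}(\chi_{{}_{[-r,r]}})(y_j),
\]
which follows from combining $\chi_{{}_{\overline{Q}_r}}(z)=\prod_{j=1}^d\chi_{{}_{[-r,r]}}(z_j)$ with the product structure $E_\kappa(x,y)=\prod_{j=1}^dE_{\kappa_j}(x_j,y_j)$ through the spectral definition of $\tau^\kappa_x$; since the Dunkl transform of a tensor product is the tensor product of the one-dimensional transforms, multiplying by $E_\kappa(ix,\cdot)=\prod_j E_{\kappa_j}(ix_j,\cdot)$ and inverting factors over the $j$ coordinates. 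Once this is in hand, it suffices to prove the rank-one case, namely that for $x,y\in\mathbb R\setminus\{0\}$ one has $\tau^\kappa_x(\chi_{{}_{[-r,r]}})(y)=\int_{\mathbb R}\chi_{{}_{[-r,r]}}(z)\,\mathrm d\nu^{\kappa,+}_{x,y}(z)$ together with positivity; the multidimensional identity and positivity then follow immediately by taking the product over $j$, because the product of positive factors is positive and the product of the one-dimensional integral representations is exactly $\int_{\mathbb R^d}\chi_{{}_{\overline{Q}_r}}(z)\,\mathrm d\upsilon^\kappa_{x,y}(z)$ by Fubini.

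For the rank-one identity I would start from Rösler's explicit formula \eqref{explicite} and compute $\tau^\kappa_x(\chi_{{}_{[-r,r]}})(y)$ directly. The two integrals in \eqref{explicite} involve $f$ evaluated at $\pm\sqrt{x^2+y^2+2xyt}$; applying $f=\chi_{{}_{[-r,r]}}$ turns each integrand into the indicator that $\sqrt{x^2+y^2+2xyt}\le r$, and I would then perform the change of variables $z=\sqrt{x^2+y^2+2xyt}$ in the first integral and $z=-\sqrt{x^2+y^2+2xyt}$ in the second, so that $t=\sigma_{x,y,z}$ and $\mathrm dt$ produces the Jacobian that rebuilds the kernel $K_\kappa(|x|,|y|,|z|)$. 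The two halves, corresponding to $z>0$ and $z<0$, should combine into a single integral against $\tfrac12 K_\kappa(|x|,|y|,|z|)(1-\sigma_{x,y,z})\,\mathrm d\mu_\kappa(z)=\mathrm d\nu^{\kappa,+}_{x,y}(z)$, matching the definition of the positive measure; the weight $\Phi_\kappa(t)=M_\kappa(1+t)(1-t^2)^{\kappa-1}$ and the factors $\bigl(1\pm\tfrac{x+y}{\sqrt{\cdots}}\bigr)$ are precisely what is needed to reproduce the density $\tfrac12 K_\kappa(1-\sigma_{x,y,z})h_\kappa^2(z)$ after the substitution.

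The main obstacle will be the bookkeeping in this change of variables, and in particular verifying that the $(1+t)$ factor in $\Phi_\kappa$ together with the $\bigl(1\pm\tfrac{x+y}{\sqrt{\cdots}}\bigr)$ prefactors collapses to the clean factor $(1-\sigma_{x,y,z})$ in the measure, rather than an extra asymmetric term; this is where the specific algebraic identities relating $\sigma_{x,y,z}$, $\varrho(x,y,z)$ and the triangle area $\Delta(x,y,z)$ become essential. I would handle the sign of $xy$ carefully, since the substitution $z=\sqrt{x^2+y^2+2xyt}$ is monotone in $t$ only after fixing the sign of $xy$, and treat the degenerate boundary values $t=\pm1$ (where $z=\bigl||x|-|y|\bigr|$ or $|x|+|y|$) so that the resulting $z$ ranges exactly over $\bigl[\,\bigl||x|-|y|\bigr|,|x|+|y|\,\bigr]$, consistent with the support statement in Proposition~\ref{productformula}. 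The positivity claim on $\mathbb R^d_{\,\mathrm{reg}}$ is then the observation already recorded before the lemma, that $|z|\in\bigl[\,\bigl||x|-|y|\bigr|,|x|+|y|\,\bigr]$ forces $|\sigma_{x,y,z}|\le1$, hence $1-\sigma_{x,y,z}\ge0$ and each one-dimensional measure $\nu^{\kappa_j,+}_{x_j,y_j}$ is nonnegative; taking the product over $j$ preserves positivity and completes the proof.
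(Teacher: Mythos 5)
Your overall architecture matches the paper's (reduce to the rank-one identity, then tensor it up using $E_\kappa(x,y)=\prod_j E_{\kappa_j}(x_j,y_j)$), but your route to the one-dimensional identity $\tau^\kappa_x(\chi_{{}_{[-r,r]}})(y)=\int_{\mathbb R}\chi_{{}_{[-r,r]}}\,\mathrm d\nu^{\kappa,+}_{x,y}$ is genuinely different. The paper never touches R\"osler's explicit formula \eqref{explicite}: it mollifies $\chi_{{}_{[-r,r]}}$ by the Dunkl heat kernel $q^t_\kappa$ so that the inversion formula applies, invokes the product formula for the Dunkl kernel (Proposition~\ref{productformula}) to turn $\tau^\kappa_x(\chi_{{}_{[-r,r]}}*_\kappa q^t_\kappa)(y)$ into an integral against the signed measure $\nu^\kappa_{x,y}$, drops the odd part of that measure because $\chi_{{}_{[-r,r]}}*_\kappa q^t_\kappa$ is even, and then lets $t\to0$; your direct change of variables is more elementary and avoids the limiting argument for the rank-one step. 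Three points must be nailed down for your version to close. First, the substitution gives $t=-\sigma_{x,y,z}$, not $t=\sigma_{x,y,z}$; the sign is essential, since it is exactly what converts the factor $(1+t)$ of $\Phi_\kappa$ into $(1-\sigma_{x,y,z})$. Second, the prefactors do \emph{not} collapse pointwise to $\tfrac12(1-\sigma_{x,y,z})$: carrying out the substitution in both halves of \eqref{explicite} reproduces the density $K_\kappa(|x|,|y|,|z|)\varrho(x,y,z)=\tfrac12K_\kappa(1-\sigma_{x,y,z})\bigl(1+\tfrac{x+y}{z}\bigr)$ of the full measure $\nu^\kappa_{x,y}$ on $z\in\mathbb R$, and the asymmetric term $\tfrac{x+y}{z}$ is odd in $z$, vanishing only upon integration against the even function $\chi_{{}_{[-r,r]}}$ --- so the parity cancellation the paper performs is still needed, just at a different stage. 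Third, \eqref{explicite} is stated for continuous $f$ while $\tau^\kappa_x$ is defined spectrally on $L^2(\mu_\kappa)$, so applying it to an indicator (and, in your factorization step, applying the inversion formula to $\chi_{{}_{\overline{Q}_r}}$, whose Dunkl transform is not integrable) requires an approximation or mollification argument; this is precisely what the paper's heat-kernel regularization supplies, and you should either borrow it or approximate the indicator by continuous functions in $L^2$ and pass to the limit almost everywhere. None of these is a fatal flaw, but each must be written out.
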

Before we come to the proof of this lemma, let us introduce the so-called Dunkl heat kernel $q_\kappa^t$ which is associated with the Dunkl Laplacian $\Delta_\kappa=\sum_{j=1}^d\mathcal D_j^2$. This kernel is given for every $t>0$ by
\[
q_\kappa^t(\cdot)=\frac{1}{(2t)^{\gamma_\kappa+\frac{d}{2}}}\,\mathrm{e}^{-\frac{\|\cdot\|^2}{4t}}.
\]
It satisfies $\mathcal F_\kappa(q_\kappa^t)(\cdot)=\mathrm{e}^{-t\|\cdot\|^2}$ and the following equality
\begin{equation} \label{tdhk}
\tau^\kappa_x(q_\kappa^t)(y)=\frac{1}{(2t)^{\gamma_\kappa+\frac{d}{2}}}\,\mathrm{e}^{-\frac{\|x\|^2+\|y\|^2}{4t}}\,E_\kappa\biggl(\frac{x}{\sqrt{2t}},-\frac{y}{\sqrt{2t}}\biggr), \quad x,y \in \mathbb R^d.
\end{equation}
Moreover, we know that $\tau^\kappa_x(q_\kappa^t)(y)>0$ for  $x$ and $y$ in $\mathbb R^d$ and that
\begin{equation} \label{idhk}
\int_{\mathbb R^d}\tau^\kappa_{x}(q_\kappa^t)(y)\,\mathrm{d}\mu_\kappa(y)=\frac{1}{c_\kappa}.
\end{equation}
For all these results (and for more details), the reader may consult \cite{ro3} or \cite{ro5}.
\newline
\indent We now turn to the proof of Lemma \ref{cle}.
\begin{proof}
One begins with the proof of the following  one-dimensional equality
\begin{equation}
\label{txchir} \tau^\kappa_x(\chi_{{}_{[-r,r]}})(y)=\int_{\mathbb{R}}\chi_{{}_{[-r,r]}}(z)\,\mathrm{d}\nu^{\kappa,+}_{x,y}(z), \quad x, y \in \mathbb R\setminus\{0\}.
\end{equation}
\indent Let $q_\kappa^t$ be the Dunkl heat kernel defined above. 
\newline
We readily observe that $\chi_{{}_{[-r,r]}}*_\kappa q_\kappa^t \in L^1(\mu_\kappa)$, which implies, on account of Proposition~\ref{bornitude}, that $\tau^\kappa_x(\chi_{{}_{[-r,r]}}*_\kappa q_\kappa^t) \in L^1(\mu_\kappa)$. Moreover,  we have by Hölder's inequality and Plancherel's theorem
\[
\bigl\|\mathcal F_\kappa(\chi_{{}_{[-r,r]}})\cdot\mathcal F_\kappa(q_\kappa^t)\bigr\|_{\kappa,1}\leqslant \bigl\|\chi_{{}_{[-r,r]}}\bigr\|_{\kappa,2}\bigl\|q_\kappa^t\bigr\|_{\kappa,2},
\] 
from which we deduce that
\[
\mathcal F_\kappa(\chi_{{}_{[-r,r]}}*_\kappa q_\kappa^t)=\mathcal F_\kappa(\chi_{{}_{[-r,r]}})\cdot\mathcal F_\kappa(q_\kappa^t) \in L^1(\mu_\kappa).
\]
Since we have by definition
\[
\mathcal F_\kappa\bigl(\tau^\kappa_x(\chi_{{}_{[-r,r]}}*_\kappa q_\kappa^t)\bigr)(\cdot)=E_\kappa(ix,\cdot)\mathcal F_\kappa(\chi_{{}_{[-r,r]}}*_\kappa q_\kappa^t)(\cdot),
\]
then $\mathcal F_\kappa\bigl(\tau^\kappa_x(\chi_{{}_{[-r,r]}}*_\kappa q_\kappa^t)\bigr) \in L^1(\mu_\kappa)$ and we can apply the inversion formula to obtain
\[
\tau^\kappa_x(\chi_{{}_{[-r,r]}}*_\kappa q_\kappa^t)(y)=c_\kappa\int_\mathbb{R} E_\kappa(ix,z)E_\kappa(iy,z)\mathcal F_\kappa(\chi_{{}_{[-r,r]}})(z)\mathrm{e}^{-tz^2}\,\mathrm{d}\mu_\kappa(z).
\]
If we now use the product formula of Proposition \ref{productformula} we get
\begin{align*}
\tau^\kappa_x(\chi_{{}_{[-r,r]}}*_\kappa q_\kappa^t)(y)&=c_\kappa\int_\mathbb{R}\biggl(\int_\mathbb{R} E_\kappa(iz,z')\,\mathrm{d}\nu^\kappa_{x,y}(z')\biggr)\mathcal F_\kappa(\chi_{{}_{[-r,r]}})(z)\mathrm{e}^{-tz^2}\,\mathrm{d}\mu_\kappa(z)\\
&=c_\kappa\int_\mathbb{R}\biggl(\int_\mathbb{R} E_\kappa(iz,z')\mathcal F_\kappa(\chi_{{}_{[-r,r]}})(z)\mathrm{e}^{-tz^2}\,\mathrm{d}\mu_\kappa(z)\biggr)\mathrm{d}\nu^\kappa_{x,y}(z'),
\end{align*}
from which we deduce thanks to the inversion formula
\begin{equation} \label{partiel}
\tau^\kappa_x(\chi_{{}_{[-r,r]}}*_\kappa q_\kappa^t)(y)=\int_\mathbb{R} (\chi_{{}_{[-r,r]}}*_\kappa q_\kappa^t)(z')\,\mathrm{d}\nu^\kappa_{x,y}(z').
\end{equation}
But we claim that $\chi_{{}_{[-r,r]}}*_\kappa q_\kappa^t$ is an even function. Indeed
\begin{align*}
(\chi_{{}_{[-r,r]}}*_\kappa q_\kappa^t)(-\xi)&=c_\kappa\int_{\mathbb R}\chi_{{}_{[-r,r]}}(\xi')\tau^\kappa_{-\xi}(q_\kappa^t)(-\xi')\,\mathrm{d}\mu_\kappa(\xi')\\
&=c_\kappa\int_{\mathbb R}\chi_{{}_{[-r,r]}}(\xi')\tau^\kappa_{\xi}(q_\kappa^t)(\xi')\,\mathrm{d}\mu_\kappa(\xi')=(\chi_{{}_{[-r,r]}}*_\kappa q_\kappa^t)(\xi),
\end{align*}
where we have used the definition of $*_\kappa$ in the first step, the formula \eqref{tdhk} in the second step (in order to prove that $\tau^\kappa_{-\xi}(q_\kappa^t)(-\xi')=\tau^\kappa_{\xi}(q_\kappa^t)(\xi')$) and a change of variables and the definition of the Dunkl convolution in the last step. 
\newline
Since  both $z\mapsto \sigma_{z,x,y}$ and $z\mapsto \sigma_{z,y,x}$ are odd functions, the equality \eqref{partiel} is therefore equivalent to the following one
\begin{equation} \label{txchir1}
\tau^\kappa_x(\chi_{{}_{[-r,r]}}*_\kappa q_\kappa^t)(y)=\int_\mathbb{R} (\chi_{{}_{[-r,r]}}*_\kappa q_\kappa^t)(z')\,\mathrm{d}\nu^{\kappa,+}_{x,y}(z').
\end{equation} 
In order to prove \eqref{txchir} we will take limit in \eqref{txchir1} as $t$ goes to $0$.
Observe that, by Plancherel's theorem
\begin{align*}
\bigl\|\chi_{{}_{[-r,r]}}*_\kappa q_\kappa^t-\chi_{{}_{[-r,r]}}\bigr\|^2_{\kappa,2}&=\bigl\|\mathcal F_\kappa(\chi_{{}_{[-r,r]}})\cdot\mathcal F_\kappa(q_\kappa^t)-\mathcal F_\kappa(\chi_{{}_{[-r,r]}})\bigr\|^2_{\kappa,2}\\
&=\int_{\mathbb{R}}\bigl|\mathcal F_\kappa(\chi_{{}_{[-r,r]}})(\xi)\bigr|^2\bigl(1-\mathrm{e}^{-t\xi^2}\bigr)^2\,\mathrm{d}\mu_\kappa(\xi).
\end{align*}
Thus, $\chi_{{}_{[-r,r]}}*_\kappa q_\kappa^t \to \chi_{{}_{[-r,r]}}$ in $L^2(\mu_\kappa)$ as $t \to 0$. Since $\tau^\kappa_x$ is a bounded operator on~$L^2(\mu_\kappa)$ we also  have $\tau^\kappa_x(\chi_{{}_{[-r,r]}}*_\kappa q_\kappa^t) \to \tau^\kappa_x(\chi_{{}_{[-r,r]}})$ in $L^2(\mu_\kappa)$ as $t \to 0$. By passing to a subsequence if necessary we can therefore assume that the convergence is also almost everywhere. Taking limit as $t$ goes to $0$ in \eqref{txchir1} gives us 
\[
\tau^\kappa_x(\chi_{{}_{[-r,r]}})(y)=\lim_{t\to0}\int_\mathbb{R} (\chi_{{}_{[-r,r]}}*_\kappa q_\kappa^t)(z')\,\mathrm{d}\nu^{\kappa,+}_{x,y}(z').
\]
Then \eqref{txchir} is proved if we show the following equality
\begin{equation}
\label{txchir2} \lim_{t\to0}\int_\mathbb{R} (\chi_{{}_{[-r,r]}}*_\kappa q_\kappa^t)(z')\,\mathrm{d}\nu^{\kappa,+}_{x,y}(z')=\int_{\mathbb{R}}\chi_{{}_{[-r,r]}}(z')\,\mathrm{d}\nu^{\kappa,+}_{x,y}(z').
\end{equation}
In view of this, we shall use the Lebesgue dominated convergence theorem. Since the almost everywhere convergence of $\chi_{{}_{[-r,r]}}*_\kappa q_\kappa^t$ to $\chi_{{}_{[-r,r]}}$ has been already proved above, it suffices to majorize $|\chi_{{}_{[-r,r]}}*_\kappa q_\kappa^t|$ by a function independent of $t$ and which is integrable with respect to $\nu^{\kappa,+}_{x,y}$. 
\newline
By the definition of the Dunkl convolution
\[
(\chi_{{}_{[-r,r]}}*_\kappa q_\kappa^t)(z')=c_\kappa\int_{\mathbb{R}}\chi_{{}_{[-r,r]}}(\xi)\tau^\kappa_{z'}(q_\kappa^t)(-\xi)\,\mathrm{d}\mu_\kappa(\xi),
\]
from which we deduce that
\[
\bigl|(\chi_{{}_{[-r,r]}}*_\kappa q_\kappa^t)(z')\bigr|\leqslant c_\kappa\int_{\mathbb{R}}\bigl|\tau^\kappa_{z'}(q_\kappa^t)(-\xi)\bigr|\,\mathrm{d}\mu_\kappa(\xi)=c_\kappa\int_{\mathbb{R}}\tau^\kappa_{z'}(q_\kappa^t)(\xi)\,\mathrm{d}\mu_\kappa(\xi),
\]
where we have used  the positivity of $\tau^\kappa_{z'}(q_\kappa^t)$ and a change of variables in the last step.
\newline
On account of \eqref{idhk} we then obtain
\[
\bigl|(\chi_{{}_{[-r,r]}}*_\kappa q_\kappa^t)(z')\bigr|\leqslant 1.
\]
Since the function equal to $1$ is integrable with respect to $\nu^{\kappa,+}_{x,y}$, the Lebesgue dominated convergence theorem allows us to complete the proof of \eqref{txchir2} and then \eqref{txchir} is proved. 
\newline
Let us point out that we deduce from \eqref{txchir} the positivity of $\tau^\kappa_x(\chi_{{}_{[-r,r]}})$.
\newline
\indent We next prove the following equality
\begin{equation} \label{taucube}
\tau^\kappa_x(\chi_{{}_{{\overline{Q}}_r}})(y)=\prod_{j=1}^d\tau^{\kappa_j}_{x_j}(\chi_{{}_{[-r,r]}})(y_j), \quad x,y \in \mathbb R_{\,\text{reg}}^d.
\end{equation}
We can apply the inversion formula (by a reprise of the argument given above) to obtain
\begin{equation} \label{invtaucube}
\tau^\kappa_x(\chi_{{}_{{\overline{Q}}_r}}*_\kappa q_\kappa^t)(y)=c_\kappa\int_{\mathbb R^d}E_\kappa(ix,z)E_\kappa(iy,z)\mathcal F_\kappa(\chi_{{}_{{\overline{Q}}_r}})(z)\mathrm{e}^{-t\|z\|^2}\,\mathrm{d}\mu_\kappa(z).
\end{equation}
Let us notice that we have the following product formula
\begin{equation} \label{dunklcube}
\mathcal F_\kappa(\chi_{{}_{{\overline{Q}}_r}})(z)=\prod_{j=1}^d\mathcal F_{\kappa_j}(\chi_{{}_{[-r,r]}})(z_j), \quad z \in \mathbb R^d.
\end{equation}
Indeed, by the definition of the Dunkl transform we have
\[
\mathcal F_\kappa(\chi_{{}_{{\overline{Q}}_r}})(z)=c_\kappa\int_{\mathbb R^d} E_\kappa(z,-iz')\chi_{{}_{{\overline{Q}}_r}}(z')\,\mathrm{d}\mu_\kappa(z').
\]
Since we can separate the variables we get
\[
\mathcal F_\kappa(\chi_{{}_{{\overline{Q}}_r}})(z)=\prod_{j=1}^d\biggl(\int_{\mathbb R}c_{\kappa_j}E_{\kappa_j}(z_j,-iz'_j)\chi_{{}_{[-r,r]}}(z'_j)h^2_{\kappa_j}(z'_j)\,\mathrm{d} z'_j\biggr),
\]
from which \eqref{dunklcube} follows. We combine \eqref{dunklcube} with \eqref{invtaucube} to obtain
\begin{multline*}
\tau^\kappa_x(\chi_{{}_{{\overline{Q}}_r}}*_\kappa q_\kappa^t)(y)\\
=\prod_{j=1}^d\biggl(\int_{\mathbb R}c_{\kappa_j}E_{\kappa_j}(ix_j,z_j)E_{\kappa_j}(iy_j,z_j)\mathcal F_{\kappa_j}(\chi_{{}_{[-r,r]}})(z_j)\mathrm{e}^{-tz_j^2}h^2_{\kappa_j}(z_j)\,\mathrm{d} z_j\biggr),
\end{multline*}
that is to say
\[
\tau^\kappa_x(\chi_{{}_{{\overline{Q}}_r}}*_\kappa q_\kappa^t)(y)=\prod_{j=1}^d\tau^{\kappa_j}_{x_j}(\chi_{{}_{[-r,r]}}*_{\kappa_j} q_{\kappa_j}^t)(y_j),
\]
from which we deduce \eqref{taucube} by taking limit.
\newline
The proof of the lemma is now obvious. Indeed, using the equality \eqref{txchir} in \eqref{taucube} gives us
\[
\tau^\kappa_x(\chi_{{}_{{\overline{Q}}_r}})(y)=\prod_{j=1}^d\int_{\mathbb{R}}\chi_{{}_{[-r,r]}}(z_j)\,\mathrm{d}\nu^{\kappa_j,+}_{x_j,y_j}(z_j),
\]
which is precisely what we wanted to prove.
\end{proof} 
We are now in a position to prove that $M_\kappa^Q$ controls $M_\kappa$. More precisely, we have the following proposition.
\begin{pr} \label{besoin} 
There exists a positive constant $C$ such that for every $x \in \mathbb R^d_{\,\mathrm{reg}}$ we have 
\[
0\leqslant M_\kappa f(x)\leqslant CM^Q_\kappa |f|(x).
\]
\end{pr}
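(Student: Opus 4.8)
The plan is to estimate the ball-average defining $M_\kappa$ by the cube-average defining $M_\kappa^Q$, the bridge being the pointwise domination $0\leqslant\tau^\kappa_x(\chi_{{}_{B_r}})\leqslant\tau^\kappa_x(\chi_{{}_{\overline{Q}_r}})$ on $\mathbb R^d_{\,\mathrm{reg}}$. Writing out the Dunkl convolution and passing the absolute value inside the integral gives, for $x\in\mathbb R^d_{\,\mathrm{reg}}$,
\[
\frac{1}{\mu_\kappa(B_r)}\bigl|(f*_\kappa\chi_{{}_{B_r}})(x)\bigr|\leqslant\frac{c_\kappa}{\mu_\kappa(B_r)}\int_{\mathbb R^d}|f(y)|\,\bigl|\tau^\kappa_x(\chi_{{}_{B_r}})(-y)\bigr|\,\mathrm{d}\mu_\kappa(y),
\]
so everything reduces to controlling $\bigl|\tau^\kappa_x(\chi_{{}_{B_r}})\bigr|$. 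Since $-y\in\mathbb R^d_{\,\mathrm{reg}}$ whenever $y\in\mathbb R^d_{\,\mathrm{reg}}$ and $\mu_\kappa$ gives no mass to the complement of $\mathbb R^d_{\,\mathrm{reg}}$, it suffices to work on $\mathbb R^d_{\,\mathrm{reg}}$.

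The heart of the matter is to show that the positive-measure representation of Lemma \ref{cle} persists with the ball in place of the cube, namely
\[
\tau^\kappa_x(\chi_{{}_{B_r}})(y)=\int_{\mathbb R^d}\chi_{{}_{B_r}}(z)\,\mathrm{d}\upsilon^\kappa_{x,y}(z),\qquad x,y\in\mathbb R^d_{\,\mathrm{reg}}.
\]
I would obtain this by rerunning the argument of Lemma \ref{cle} directly in dimension $d$, the only difference being that $\chi_{{}_{B_r}}$ does not factor, so the one-dimensional reduction \eqref{taucube} is unavailable. Instead I would regularize by the heat kernel, apply the inversion formula exactly as for \eqref{invtaucube}, and use the $d$-dimensional product formula for $E_\kappa$---which follows at once from $E_\kappa(x,y)=\prod_jE_{\kappa_j}(x_j,y_j)$ together with Proposition \ref{productformula} and the product measure $\nu^\kappa_{x,y}=\nu^{\kappa_1}_{x_1,y_1}\otimes\cdots\otimes\nu^{\kappa_d}_{x_d,y_d}$---to arrive at
\[
\tau^\kappa_x(\chi_{{}_{B_r}}*_\kappa q_\kappa^t)(y)=\int_{\mathbb R^d}(\chi_{{}_{B_r}}*_\kappa q_\kappa^t)(z)\,\mathrm{d}\nu^\kappa_{x,y}(z).
\]
Because $\chi_{{}_{B_r}}$ is invariant under each sign change, so is $\chi_{{}_{B_r}}*_\kappa q_\kappa^t$; expanding the product measure into its $2^d$ pieces and using that the odd part of each factor annihilates a function even in that variable, I may replace every $\nu^{\kappa_j}_{x_j,y_j}$ by its positive part $\nu^{\kappa_j,+}_{x_j,y_j}$, that is $\nu^\kappa_{x,y}$ by $\upsilon^\kappa_{x,y}$. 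Letting $t\to0$ as in Lemma \ref{cle}---the convolution tends to $\chi_{{}_{B_r}}$ in $L^2(\mu_\kappa)$ and a.e. along a subsequence, while $\bigl|\chi_{{}_{B_r}}*_\kappa q_\kappa^t\bigr|\leqslant1$ by \eqref{idhk} and $\upsilon^\kappa_{x,y}$ is finite, so dominated convergence applies---yields the displayed representation; in particular $\tau^\kappa_x(\chi_{{}_{B_r}})\geqslant0$.

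With this in hand the domination is immediate: since $B_r\subset\overline{Q}_r$ gives $\chi_{{}_{B_r}}\leqslant\chi_{{}_{\overline{Q}_r}}$ and $\upsilon^\kappa_{x,y}$ is positive, integrating against it and invoking Lemma \ref{cle} gives $0\leqslant\tau^\kappa_x(\chi_{{}_{B_r}})(y)\leqslant\tau^\kappa_x(\chi_{{}_{\overline{Q}_r}})(y)$. Substituting into the first display, the integrand becomes nonnegative, and since the homogeneity of degree $2\gamma_\kappa$ of $h_\kappa^2$ makes $\mu_\kappa(B_r)/\mu_\kappa(\overline{Q}_r)=\mu_\kappa(B_1)/\mu_\kappa(\overline{Q}_1)$ a fixed positive constant (and $\chi_{{}_{Q_r}}=\chi_{{}_{\overline{Q}_r}}$ holds $\mu_\kappa$-a.e.), I obtain
\[
\frac{1}{\mu_\kappa(B_r)}\bigl|(f*_\kappa\chi_{{}_{B_r}})(x)\bigr|\leqslant C\,\frac{1}{\mu_\kappa(Q_r)}\int_{\mathbb R^d}|f(y)|\,\tau^\kappa_x(\chi_{{}_{Q_r}})(-y)\,\mathrm{d}\mu_\kappa(y)\leqslant C\,M^Q_\kappa|f|(x).
\]
Taking the supremum over $r>0$ yields $M_\kappa f(x)\leqslant CM^Q_\kappa|f|(x)$, while $M_\kappa f(x)\geqslant0$ is clear. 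The main obstacle is precisely the middle paragraph: unlike the cube, the ball is not a product set, so the clean one-dimensional reduction \eqref{taucube} fails and one must instead push the full $d$-dimensional heat-kernel and product-formula argument through, exploiting the $\mathbb Z_2^d$-invariance of $\chi_{{}_{B_r}}$ to recover both positivity and the comparison with the cube.
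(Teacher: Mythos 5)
Your proposal is correct and follows essentially the same route as the paper: reduce everything to the pointwise domination $0\leqslant\tau^\kappa_x(\chi_{{}_{\overline{B}_r}})\leqslant\tau^\kappa_x(\chi_{{}_{\overline{Q}_r}})$ on $\mathbb R^d_{\,\mathrm{reg}}$ via the representation $\tau^\kappa_x(\chi_{{}_{\overline{B}_r}})(y)=\int\chi_{{}_{\overline{B}_r}}\,\mathrm{d}\upsilon^\kappa_{x,y}$, then compare $\mu_\kappa(B_r)$ with $\mu_\kappa(Q_r)$ by homogeneity. The only difference is that the paper dismisses the ball representation as ``straightforward by a reprise of the argument of Lemma \ref{cle}'' while you actually carry out that reprise (heat-kernel regularization, $d$-dimensional product formula, $\mathbb Z_2^d$-invariance to pass to the positive parts $\nu^{\kappa_j,+}_{x_j,y_j}$), which is a faithful and correct filling-in of the omitted detail.
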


\begin{proof} 
Thanks to the definition of $M_\kappa$ there is nothing to do for the first inequality. 
\newline \indent We now turn to the second one.\newline
Let $x \in \mathbb R^d_{\,\mathrm{reg}}$ and $r>0$. Let us remark that we readily have
\begin{equation} \label{int1}
\int_{\mathbb R^d}f(y)\tau^\kappa_x(\chi_{{}_{{B}_r}})(-y)\,\mathrm{d}\mu_\kappa(y)=\int_{\mathbb R^d_{\,\text{reg}}}f(y)\tau^\kappa_x(\chi_{{}_{\overline{{B}}_r}})(-y)\,\mathrm{d}\mu_\kappa(y).
\end{equation}
\newline
The key argument for the proof is that we can show, even if $\tau^\kappa_x$ is not a positive operator, the following inequality
\begin{equation} \label{int2}
0\leqslant\tau^\kappa_x(\chi_{{}_{{\overline{B}}_r}})(y)\leqslant\tau^\kappa_x(\chi_{{}_{{\overline{Q}}_r}})(y), \quad x, y \in \mathbb R^d_{\,\mathrm{reg}}.
\end{equation}
Thanks to the explicit formula of $\tau^\kappa_x(\chi_{{}_{{\overline{Q}}_r}})$ given in the previous lemma, it is enough to show that 
\begin{equation} \label{repboule}
\tau^\kappa_x(\chi_{{}_{{\overline{B}}_r}})(y)=\int_{\mathbb R^d}\chi_{{}_{{\overline{B}}_r}}(z)\,\mathrm{d}\upsilon^\kappa_{x,y}(z), \quad x,y \in \mathbb R^d_{\,\mathrm{reg}},
\end{equation}
in order to prove \eqref{int2}. Therefore, we now turn to the proof of \eqref{repboule}. By a reprise of the argument given in the proof of Lemma \ref{cle}, we can apply the inversion formula to write for both $x$ and $y$ in $\mathbb R^d_{\,\mathrm{reg}}$
\[
\tau^\kappa_x(\chi_{{}_{{\overline{B}}_r}}*_\kappa q_\kappa^t)(y)=c_\kappa\int_{\mathbb{R}^d} E_\kappa(ix,z)E_\kappa(iy,z)\mathcal F_\kappa(\chi_{{}_{{\overline{B}}_r}})(z)\mathrm{e}^{-t\|z\|^2}\,\mathrm{d}\mu_\kappa(z).
\]
Since $E_\kappa(x,y)=\prod_{j=1}^dE_{\kappa_j}(x_j,y_j)$, we have thanks to Proposition \ref{productformula}
\begin{multline*}
\tau^\kappa_x(\chi_{{}_{{\overline{B}}_r}}*_\kappa q_\kappa^t)(y)\\
=c_\kappa\int_{\mathbb{R}^d}\biggl(\int_{\mathbb{R}^d}E_\kappa(iz,z')\,\mathrm{d}\nu^{\kappa_1}_{x_1,y_1}(z'_1)\cdots\mathrm{d}\nu^{\kappa_d}_{x_d,y_d}(z'_d)\biggr)\mathcal F_\kappa(\chi_{{}_{{\overline{B}}_r}})(z)\mathrm{e}^{-t\|z\|^2}\,\mathrm{d}\mu_\kappa(z),
\end{multline*}
from which it follows
\begin{multline*}
\tau^\kappa_x(\chi_{{}_{{\overline{B}}_r}}*_\kappa q_\kappa^t)(y)\\
=c_\kappa\int_{\mathbb{R}^d}\biggl(\int_{\mathbb{R}^d}E_\kappa(iz,z')\mathcal F_\kappa(\chi_{{}_{{\overline{B}}_r}})(z)\mathrm{e}^{-t\|z\|^2}\,\mathrm{d}\mu_\kappa(z)\biggr)\,\mathrm{d}\nu^{\kappa_1}_{x_1,y_1}(z'_1)\cdots\mathrm{d}\nu^{\kappa_d}_{x_d,y_d}(z'_d).
\end{multline*}
We apply the inversion formula to get
\[
\tau^\kappa_x(\chi_{{}_{{\overline{B}}_r}}*_\kappa q_\kappa^t)(y)=\int_{\mathbb{R}^d}(\chi_{{}_{{\overline{B}}_r}}*_\kappa q_\kappa^t)(z')\,\mathrm{d}\nu^{\kappa_1}_{x_1,y_1}(z'_1)\cdots\mathrm{d}\nu^{\kappa_d}_{x_d,y_d}(z'_d),
\]
and we obtain thanks to the Fubini theorem
\begin{equation} \label{iter}
\tau^\kappa_x(\chi_{{}_{{\overline{B}}_r}}*_\kappa q_\kappa^t)(y)=\int_{\mathbb{R}^{d-1}}\biggl(\int_{\mathbb R}(\chi_{{}_{{\overline{B}}_r}}*_\kappa q_\kappa^t)(z')\,\mathrm{d}\nu^{\kappa_1}_{x_1,y_1}(z'_1)\biggr)\,\mathrm{d}\nu^{\kappa_2}_{x_2,y_2}(z'_2)\cdots\mathrm{d}\nu^{\kappa_d}_{x_d,y_d}(z'_d).
\end{equation}
Since $\chi_{{}_{{\overline{B}}_r}}$ is radial, $\chi_{{}_{{\overline{B}}_r}}*_\kappa q_\kappa^t$ is also radial. Therefore, it is even with respect to each of its variables, that is to say $(\chi_{{}_{{\overline{B}}_r}}*_\kappa q_\kappa^t)(\varepsilon_1z_1,\ldots,\varepsilon_dz_d)=(\chi_{{}_{{\overline{B}}_r}}*_\kappa q_\kappa^t)(z_1,\ldots,z_d)$ with $\varepsilon_j=\pm1$. Then \eqref{iter} is equivalent to
\[
\tau^\kappa_x(\chi_{{}_{{\overline{B}}_r}}*_\kappa q_\kappa^t)(y)=\int_{\mathbb{R}^{d-1}}\biggl(\int_{\mathbb R}(\chi_{{}_{{\overline{B}}_r}}*_\kappa q_\kappa^t)(z')\,\mathrm{d}\nu^{\kappa_1,+}_{x_1,y_1}(z'_1)\biggr)\,\mathrm{d}\nu^{\kappa_2}_{x_2,y_2}(z'_2)\cdots\mathrm{d}\nu^{\kappa_d}_{x_d,y_d}(z'_d).
\]
\newline
By successive uses of the Fubini theorem  we are readily led to
\begin{equation} \label{fina}
\tau^\kappa_x(\chi_{{}_{{\overline{B}}_r}}*_\kappa q_\kappa^t)(y)=\int_{\mathbb{R}^d}(\chi_{{}_{{\overline{B}}_r}}*_\kappa q_\kappa^t)(z')\,\mathrm{d}\nu^{\kappa_1,+}_{x_1,y_1}(z'_1)\cdots\mathrm{d}\nu^{\kappa_d,+}_{x_d,y_d}(z'_d).
\end{equation}
Taking limit as $t$ tends to $0$ in \eqref{fina} gives us \eqref{repboule} which in turn implies \eqref{int2}.
\newline
Consequently, if we apply \eqref{int2} in \eqref{int1} we are led to the following inequality
\[
\biggl|\int_{\mathbb R^d}f(y)\tau^\kappa_x(\chi_{{}_{{B}_r}})(-y)\,\mathrm{d}\mu_\kappa(y)\biggr|
\leqslant\int_{\mathbb R^d_{\,\text{reg}}}|f(y)|\tau^\kappa_x(\chi_{{}_{{\overline{Q}}_r}})(-y)\,\mathrm{d}\mu_\kappa(y).
\] 
Since it is obvious that
\[
\int_{\mathbb R^d_{\,\text{reg}}}|f(y)|\tau^\kappa_x(\chi_{{}_{{\overline{Q}}_r}})(-y)\,\mathrm{d}\mu_\kappa(y)=\int_{\mathbb R^d}|f(y)|\tau^\kappa_x(\chi_{{}_{{Q}_r}})(-y)\,\mathrm{d}\mu_\kappa(y),
\]
we can therefore write
\[
\biggl|\int_{\mathbb R^d}f(y)\tau^\kappa_x(\chi_{{}_{{B}_r}})(-y)\,\mathrm{d}\mu_\kappa(y)\biggr|
\leqslant\int_{\mathbb R^d}|f(y)|\tau^\kappa_x(\chi_{{}_{{Q}_r}})(-y)\,\mathrm{d}\mu_\kappa(y),
\]
from which it follows at once that
\begin{equation} \label{interm}
\frac{1}{\mu_\kappa(B_r)}\biggl|\int_{\mathbb R^d}f(y)\tau^\kappa_x(\chi_{{}_{{B}_r}})(-y)\,\mathrm{d}\mu_\kappa(y)\biggr|\leqslant\frac{1}{\mu_\kappa(B_r)}\int_{\mathbb R^d}|f(y)|\tau^\kappa_x(\chi_{{}_{{Q}_r}})(-y)\,\mathrm{d}\mu_\kappa(y).
\end{equation}
Let us notice that $\mu_\kappa(Q_r)=C\mu_\kappa(B_r)$ with
\[
C=\frac{2^d(2\gamma_\kappa+d)}{\prod_{j=1}^d\bigl(2\kappa_j+1\bigr)}\biggl(\int_{S^{d-1}}h_\kappa^2(y)\,\mathrm{d} y\biggr)^{-1}.
\]
Indeed, we have on one hand
\[
\mu_\kappa(Q_r)=\prod_{j=1}^d\mu_{\kappa_j}\bigl(]-r,r[\bigr)=2^d\prod_{j=1}^d\Bigl(\frac{1}{2\kappa_j+1}\Bigr)r^{2\gamma_\kappa+d},
\]
and on the other hand, changing to polar coordinates gives 
\[
\mu_\kappa(B_r)=\int_0^r u^{2\gamma_\kappa+d-1}\,\mathrm{d} u\int_{S^{d-1}}h_\kappa^2(y)\,\mathrm{d} y=\frac{1}{2\gamma_\kappa+d}\biggl(\int_{S^{d-1}}h_\kappa^2(y)\,\mathrm{d} y\biggr)r^{2\gamma+d},
\]
where we have used the fact that $h_\kappa^2$ is homogeneous of degree $2\gamma_\kappa$. \newline We can therefore reformulate \eqref{interm} as follows
\[
\frac{1}{\mu_\kappa(B_r)}\biggl|\int_{\mathbb R^d}f(y)\tau^\kappa_x(\chi_{{}_{{B}_r}})(-y)\,\mathrm{d}\mu_\kappa(y)\biggr|\leqslant \frac{C}{\mu_\kappa(Q_r)}\int_{\mathbb R^d}|f(y)|\tau^\kappa_x(\chi_{{}_{{Q}_r}})(-y)\,\mathrm{d}\mu_\kappa(y),
\]
from which we deduce that
\[
\frac{1}{\mu_\kappa(B_r)}\biggl|\int_{\mathbb R^d}f(y)\tau^\kappa_x(\chi_{{}_{{B}_r}})(-y)\,\mathrm{d}\mu_\kappa(y)\biggr|\leqslant CM^Q_\kappa |f|(x),
\]
and then the result.
\end{proof}

Thanks to this proposition, it is enough to construct an operator $M_\kappa^R$ which controls $M_\kappa^Q$ in order to prove the inequality \eqref{eq1}. Before we come to the definition of $M_\kappa^R$ we give some notations.
\begin{notas} For $z=(z_1,\ldots,z_d) \in \mathbb R^d$ we put $\tilde{z}=\bigl(|z_1|,\ldots,|z_d|\bigr)$ and we denote by~$R(z,r)$ (for every $r>0$) the following set
\[
R(z,r)=I(z_1,r)\times\cdots\times I(z_d,r).
\]
Recall that we have defined for $x \in \mathbb R$ and $r>0$ the set $I(x,r)$ by
\[
I(x,r)=\bigl[\max\{0;|x|-r\},|x|+r\bigr[.
\]
\end{notas}

Since we want to use the sharp inequality of Poposition \ref{abdelkefisifi} together with the fact that $\tau^\kappa_x(\chi_{{}_{{\overline{Q}}_r}})(y)=\prod_{j=1}^d\tau_{x_j}^{\kappa_j}(\chi_{{}_{[-r,r]}})(y_j)$, we are naturally led to introduce the following operator.
\begin{de} Let $M^R_\kappa$ be the weighted maximal operator defined by
\[
M^R_\kappa f(x)=\sup_{r>0}\frac{1}{\mu_\kappa\bigl(R(x,r)\bigr)}\int_{\tilde{y} \in R(x,r)}|f(y)|\,\mathrm{d}\mu_\kappa(y), \quad x \in \mathbb R^d.
\]
\end{de}

This operator satisfies the classical properties of maximal operators. Let us clarify our statement.
\newline
Since $\mu_\kappa$ is a doubling weight, we have the following covering  lemma (a one-dimensional result for $I(x,r)$ can be found in \cite{as} or \cite{bxu}).
\begin{lem}
Let $E$ be a measurable (with respect to $\mu_\kappa$) subset of $\mathbb R_+~{\times\cdots\times\mathbb R_+}$. Suppose $E\subset\cup_{j \in J}R_j$ with $R_j=R(z_j,r_j)$ bounded for every $j \in J$ (where $z_j \in \mathbb R^d$ and $r_j>0$).  Then, from this family, we can choose a sequence (which may be finite) of disjoint sets $R_1,\ldots,R_n,\ldots$, such that 
\[
\mu_\kappa(E)\leqslant C\sum_n\mu_\kappa(R_n),
\] 
where $C$ is a positive constant which depends only on $\kappa_1,\ldots,\kappa_d$.
\end{lem}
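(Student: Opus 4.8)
The plan is to prove this by a Vitali--Wiener covering argument. The two facts that make such an argument run are (i) that $\mu_\kappa$ is \emph{doubling} with respect to the sets $R(z,r)$, i.e.\ $\mu_\kappa\bigl(R(z,\lambda r)\bigr)\leqslant C(\lambda)\,\mu_\kappa\bigl(R(z,r)\bigr)$ for each fixed $\lambda\geqslant1$, and (ii) an \emph{engulfing} property: two intersecting sets $R(z,r)$, $R(w,s)$ of comparable size satisfy an inclusion of the smaller into a fixed dilate of the larger. Both statements factor over the coordinates, since $\mu_\kappa\bigl(R(z,r)\bigr)=\prod_{j=1}^d\mu_{\kappa_j}\bigl(I(z_j,r)\bigr)$ and $R(z,r)\cap R(w,s)\neq\emptyset$ forces $I(z_j,r)\cap I(w_j,s)\neq\emptyset$ for every $j$; hence everything reduces to one-dimensional assertions about the intervals $I(x,r)$.

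The doubling property (i) is exactly the product over $j$ of the one-dimensional doubling of $\mu_{\kappa_j}$ on the intervals $I(x,r)$, which is the result quoted from \cite{as, bxu}. For (ii) I would establish the one-dimensional inclusion: \emph{if $a,b\geqslant0$ and $I(a,r)\cap I(b,s)\neq\emptyset$ with $s\leqslant r$, then $I(b,s)\subseteq I(a,3r)$}, and take the product over $j$ to conclude $R(w,s)\subseteq R(z,3r)$. The proof of the one-dimensional inclusion is a short comparison of endpoints, using that a nonempty intersection forces the displacement $|a-b|<r+s\leqslant2r$. I expect this to be the main point of the whole lemma, and the only genuinely delicate feature is the truncation $\max\{0,|x|-r\}$ in the definition of $I(x,r)$: the left endpoints must be treated by a short case split according to whether $a-r$ and $b-s$ are positive or are cut off at the origin, and one checks that the factor $3$ absorbs the displacement in every case.

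With (i) and (ii) available the selection is standard. I may assume $\mu_\kappa(E)<+\infty$, the case of infinite measure following by exhausting $E$ with bounded boxes. By inner regularity of $\mu_\kappa$ I would choose a compact set $K\subseteq E$ with $\mu_\kappa(K)\geqslant\frac12\mu_\kappa(E)$, and by compactness I would extract a finite subcover $R_{j_1},\ldots,R_{j_m}$ of $K$. On this finite family I would run the greedy largest-first selection: pick the member of largest radius, discard every member meeting it, and repeat until the family is exhausted, obtaining a pairwise disjoint subfamily $R_{n_1},\ldots,R_{n_p}$. Every discarded set meets a selected set of radius at least its own, so by the engulfing property it is contained in the $3$-dilate of that selected set; consequently $K\subseteq\bigcup_{l=1}^p R(z_{n_l},3r_{n_l})$. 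The doubling property with $\lambda=3$ then gives
\[
\mu_\kappa(K)\leqslant\sum_{l=1}^p\mu_\kappa\bigl(R(z_{n_l},3r_{n_l})\bigr)\leqslant C\sum_{l=1}^p\mu_\kappa\bigl(R_{n_l}\bigr),
\]
and since $\mu_\kappa(K)\geqslant\frac12\mu_\kappa(E)$ this yields $\mu_\kappa(E)\leqslant C\sum_{l=1}^p\mu_\kappa(R_{n_l})$ for the finite disjoint family $R_{n_1},\ldots,R_{n_p}$, which is exactly the asserted inequality (the statement allows the chosen sequence to be finite). Beyond the engulfing computation, the only bookkeeping to be careful with is the passage from $K$ to $E$ and the reduction of the infinite-measure case, both of which are routine.
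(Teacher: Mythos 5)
Your proposal is correct and follows essentially the route the paper intends: the paper gives no written proof, relying on the doubling of $\mu_\kappa$ on the sets $R(z,r)$ (the product of the one-dimensional results quoted from the references) together with the standard Vitali--Wiener selection, which is exactly what you carry out, including the correct engulfing constant $3$ and the case split at the origin forced by the truncation in $I(x,r)$. The only point worth a remark is that the sets $R(z,r)$ are not open, so the extraction of a finite subcover of the compact set $K$ requires replacing each $R(z_j,r_j)$ by a slightly dilated open set of comparable measure (or discarding the $\mu_\kappa$-null coordinate hyperplanes), a routine adjustment that does not affect the argument.
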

Thanks to this lemma, a weak-type $(1,1)$ result  for $M_\kappa^R$ can be easily proved. Indeed, if we set 
\[
E_+=\Bigl\{x \in \mathbb R_+^*\times\cdots\times\mathbb R_+^*:M_\kappa^Rf(x)>\lambda\Bigr\},
\]
we can choose (thanks to the definition of $M_\kappa^R$ and the covering lemma)  a suitable sequence of disjoint sets $R_n$ such that $\mu_\kappa(E_+)\leqslant C\sum_n\mu_\kappa(R_n)$, where~$C$ depends only on $\kappa_1,\ldots,\kappa_d$. We can then follow the standard techniques~(see for instance~\cite{stt}) in order to prove that $\mu_\kappa(E_+)\leqslant\tfrac{C}{\lambda}\|f\|_{\kappa,1}$. 
\newline Finally, the basic but crucial observation 
\begin{equation} \label{crucial}
M^R_\kappa f(x)=M_\kappa^Rf(\varepsilon_1x_1,\ldots,\varepsilon_dx_d),
\end{equation}
with $\varepsilon_j=\pm1$, allows us to deduce the weak-type inequality, that is
\[
\mu_\kappa\Bigl(\Bigl\{x \in \mathbb R^d:M_\kappa^Rf(x)>\lambda\Bigr\}\Bigr)\leqslant\frac{C}{\lambda}\|f\|_{\kappa,1}.
\]
Since $M^R_\kappa$ is obviously bounded on $L^\infty$, the weak-type $(1,1)$ inequality implies the strong-type $(p,p)$ inequality by the Marcinkiewicz interpolation theorem (see \cite{stt}). Thus, we have proved the following maximal theorem for $M_\kappa^R$.
\begin{thm} \label{maxim}
Let $f$ be a function defined on $\mathbb R^d$.
\begin{enumerate}
\item If $f \in L^1(\mu_\kappa)$, then  for every $\lambda>0$ we have
\[
\mu_\kappa\Bigl(\Bigl\{x \in \mathbb R^d: M_\kappa^Rf(x)>\lambda\Bigr\}\Bigr)\leqslant\frac{C}{\lambda}\|f\|_{\kappa,1},
\]
where $C$ is a positive constant independent of $f$ and $\lambda$.
\item If $f \in L^p(\mu_\kappa)$, $1<p\leqslant+\infty$, then $M_\kappa^Rf \in L^p(\mu_\kappa)$ and we have
\[
\|M_\kappa^Rf\|_{\kappa,p}\leqslant C\|f\|_{\kappa,p},
\]
where $C$ is a positive constant independent of $f$.
\end{enumerate}
\end{thm}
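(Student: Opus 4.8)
The plan is to reduce everything to a weak-type $(1,1)$ estimate on the positive orthant and then to interpolate. The starting point is the symmetry relation \eqref{crucial}: since the set $R(x,r)$ depends on $x$ only through the quantities $|x_1|,\ldots,|x_d|$, the value $M_\kappa^R f(x)$ is unchanged under any sign change of the coordinates of $x$. Hence it suffices to understand $M_\kappa^R f$ on $\mathbb R_+^*\times\cdots\times\mathbb R_+^*$, and the full-space estimates will follow at the cost of a factor $2^d$.

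First I would establish the weak-type $(1,1)$ inequality for the restriction to the positive orthant. Setting
\[
E_+=\Bigl\{x \in \mathbb R_+^*\times\cdots\times\mathbb R_+^*:M_\kappa^Rf(x)>\lambda\Bigr\},
\]
for each $x \in E_+$ there is, by the definition of $M_\kappa^R$, a radius $r_x>0$ with
\[
\mu_\kappa\bigl(R(x,r_x)\bigr)<\frac{1}{\lambda}\int_{\tilde y \in R(x,r_x)}|f(y)|\,\mathrm d\mu_\kappa(y).
\]
The family $\{R(x,r_x)\}_{x \in E_+}$ covers $E_+$, so the covering lemma above yields a sequence of pairwise disjoint sets $R_n=R(x_n,r_{x_n})$ with $\mu_\kappa(E_+)\leqslant C\sum_n\mu_\kappa(R_n)$. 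Combining the two displays and using the disjointness of the $R_n$ to sum the integrals without overlap gives
\[
\mu_\kappa(E_+)\leqslant \frac{C}{\lambda}\sum_n\int_{\tilde y \in R_n}|f(y)|\,\mathrm d\mu_\kappa(y)\leqslant\frac{C}{\lambda}\|f\|_{\kappa,1}.
\]
By the symmetry \eqref{crucial} the level set in the whole of $\mathbb R^d$ is, up to the coordinate hyperplanes (which are $\mu_\kappa$-null), the union of the $2^d$ reflected copies of $E_+$, whence the claimed weak-type $(1,1)$ bound on $\mathbb R^d$, with $C$ depending only on $\kappa_1,\ldots,\kappa_d$. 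This is exactly assertion (1).

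Next I would observe that $M_\kappa^R$ is trivially bounded on $L^\infty(\mu_\kappa)$, since the averages defining it never exceed $\|f\|_{\kappa,\infty}$; this disposes of assertion (2) for $p=+\infty$. For $1<p<+\infty$ the strong-type $(p,p)$ inequality then follows from the Marcinkiewicz interpolation theorem (see \cite{stt}) applied between the weak-type $(1,1)$ bound and the $L^\infty$ bound, which completes assertion (2).

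The main obstacle is the weak-type $(1,1)$ step, and within it the extraction of the disjoint subfamily: one has to be sure that the standard Hardy--Littlewood selection procedure survives in the present \emph{folded} geometry, where the integration defining $M_\kappa^R$ runs over those $y$ with $\tilde y$ in the orthant set $R(x,r)$ rather than over $R(x,r)$ itself. This is precisely what the doubling property of $\mu_\kappa$ guarantees through the covering lemma, so once that lemma is in hand the remaining argument is routine.
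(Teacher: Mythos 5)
Your proposal is correct and follows essentially the same route as the paper: the covering lemma applied to the level set $E_+$ in the positive orthant, the standard Hardy--Littlewood selection and summation over the disjoint $R_n$, the symmetry \eqref{crucial} to pass to all of $\mathbb R^d$, and finally the trivial $L^\infty$ bound plus Marcinkiewicz interpolation. Your explicit remark that the disjointness of the $R_n$ in the orthant forces the disjointness of the folded sets $\{y:\tilde y\in R_n\}$ is exactly the point the paper leaves implicit under ``standard techniques.''
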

Moreover, we claim that the following weighted inequality is true.
\begin{lem}
Let $W$ be a positive and locally integrable (with respect to $\mu_\kappa$) function defined on $\mathbb R^d$. For $1<q<+\infty$, there exists a positive constant $C$ which depends only on $\kappa_1,\ldots,\kappa_d$ and $q$ and  such that
\[
\int_{\mathbb R^d}\bigl(M_\kappa^Rf(y)\bigr)^qW(y)\,\mathrm{d}\mu_\kappa(y)\leqslant C\int_{\mathbb R^d}|f(y)|^qM_\kappa^RW(y)\,\mathrm{d}\mu_\kappa(y).
\]
\end{lem}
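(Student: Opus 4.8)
The plan is to follow the classical Fefferman--Stein scheme, treating $M_\kappa^R$ as a Hardy--Littlewood maximal operator attached to the family $R(x,r)$ on the space of homogeneous type $(\mathbb R^d,\mu_\kappa)$. I would deduce the strong-type weighted inequality from two endpoint estimates for the sublinear operator $M_\kappa^R$ viewed from $L^q(M_\kappa^R W\,\mathrm d\mu_\kappa)$ into $L^q(W\,\mathrm d\mu_\kappa)$: a weighted weak-type $(1,1)$ bound $\int_{\{M_\kappa^R f>\lambda\}}W\,\mathrm d\mu_\kappa\le\frac{C}{\lambda}\int_{\mathbb R^d}|f|\,M_\kappa^R W\,\mathrm d\mu_\kappa$ at $q=1$, and the trivial bound at $q=\infty$, and then interpolate by the Marcinkiewicz theorem with these two fixed measures.

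First I would record the geometric engulfing property of the sets $R(x,r)$: if $\tilde y\in R(z,r)$, then $R(z,r)\subseteq R(y,2r)$, which follows at once from $I(x,r)=[\max\{0,|x|-r\},|x|+r[$ by comparing the two endpoints in each coordinate. Combined with the doubling of $\mu_\kappa$, which reduces to the one-dimensional doubling of $t\mapsto|t|^{2\kappa_j}$ on the intervals $I$ (this gives $\mu_\kappa(R(y,2r))\le C\mu_\kappa(R(z,r))$ since $|y_j|\in I(z_j,r)$ makes $I(z_j,r)$ and $I(y_j,r)$ comparable), this yields the key comparison $\frac{1}{\mu_\kappa(R(z,r))}\int_{\tilde w\in R(z,r)}W\,\mathrm d\mu_\kappa\le C\,M_\kappa^R W(y)$ valid for every $y$ with $\tilde y\in R(z,r)$, the folded integral $\int_{\tilde w\in\cdot}$ being exactly the one appearing in the definition of $M_\kappa^R$.

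For the weighted weak $(1,1)$, I would use \eqref{crucial}, which gives $M_\kappa^R f(w)=M_\kappa^R f(\tilde w)$, so that $\Omega_\lambda:=\{M_\kappa^R f>\lambda\}=\{w:\tilde w\in E_+\}$ with $E_+=\Omega_\lambda\cap(\mathbb R_+^*)^d$; since $R(x,r)$ depends only on $\tilde x$, it is natural to work throughout with these folded integrals. For each $x\in E_+$ I would choose $r_x$ with $\int_{\tilde w\in R(x,r_x)}|f|\,\mathrm d\mu_\kappa>\lambda\,\mu_\kappa(R(x,r_x))$; as $x\in R(x,r_x)$, these sets cover $E_+$, and the Vitali-type selection underlying the covering Lemma yields pairwise disjoint $R_n=R(z_n,\rho_n)$ whose fixed dilates $\widehat R_n$ still cover $E_+$ and satisfy $\mu_\kappa(\widehat R_n)\le C\mu_\kappa(R_n)$. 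Then $\Omega_\lambda\subseteq\bigcup_n\{w:\tilde w\in\widehat R_n\}$, and the comparison of the previous step applied on each $\widehat R_n$, together with $M_\kappa^R W(w)=M_\kappa^R W(\tilde w)$, gives $\int_{\tilde w\in\widehat R_n}W\,\mathrm d\mu_\kappa\le C\mu_\kappa(R_n)\inf_{R_n}M_\kappa^R W$. Using the selection inequality $\mu_\kappa(R_n)<\frac{1}{\lambda}\int_{\tilde w\in R_n}|f|\,\mathrm d\mu_\kappa$, the bound $\inf_{R_n}M_\kappa^R W\le M_\kappa^R W(w)$ valid whenever $\tilde w\in R_n$, and the disjointness of the $R_n$, I would sum to obtain $\int_{\Omega_\lambda}W\,\mathrm d\mu_\kappa\le\frac{C}{\lambda}\int_{\mathbb R^d}|f|\,M_\kappa^R W\,\mathrm d\mu_\kappa$.

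Finally, since $W>0$ implies $M_\kappa^R W>0$ everywhere, the measures $W\,\mathrm d\mu_\kappa$ and $M_\kappa^R W\,\mathrm d\mu_\kappa$ are mutually absolutely continuous, so their essential suprema coincide and the $q=\infty$ endpoint $\|M_\kappa^R f\|_{L^\infty(W\mathrm d\mu_\kappa)}\le C\|f\|_{L^\infty(M_\kappa^R W\mathrm d\mu_\kappa)}$ is immediate from the pointwise bound $M_\kappa^R f\le C\|f\|_{\kappa,\infty}$. Interpolating this with the weighted weak $(1,1)$ by Marcinkiewicz's theorem (applied with the fixed source measure $M_\kappa^R W\,\mathrm d\mu_\kappa$ and target measure $W\,\mathrm d\mu_\kappa$) produces the strong-type estimate for every $1<q<+\infty$, which is exactly the claim. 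The main obstacle is the weighted weak $(1,1)$ step: marrying the non-centred, orthant-folded geometry of the $R(x,r)$ (the engulfing and doubling for the intervals $I(x,r)$) to the Vitali selection so that the local $W$-averages over the covering sets are dominated by $M_\kappa^R W$; once that comparison is secured, the remaining steps are routine.
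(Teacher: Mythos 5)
Your proposal is correct and follows essentially the same route as the paper: the weighted weak-type $(1,1)$ inequality $\tilde{\mu}_\kappa\bigl(\{M_\kappa^R f>\lambda\}\bigr)\leqslant\frac{C}{\lambda}\int|f|\,M_\kappa^R W\,\mathrm{d}\mu_\kappa$ obtained from a Vitali-type covering of $E_+$ and the relation \eqref{crucial}, combined with the trivial $L^\infty$ endpoint and Marcinkiewicz interpolation. The paper simply defers the covering step to the classical argument (run on compact subsets of $E_+$), whereas you spell out the engulfing and doubling estimates for the sets $R(x,r)$ that make it work; the only point to be slightly careful about is to perform the Vitali selection on compact subsets of $E_+$ (or otherwise justify that the dilates of the selected disjoint sets cover), exactly as the paper's reference does.
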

Indeed, by the Marcinkiewicz interpolation theorem, this lemma is an immediate consequence of the trivial fact that $M_\kappa^R$ is bounded on $L^\infty$ together with the following inequality
\begin{equation} \label{pourpoids}
\tilde{\mu}_\kappa\Bigl(\Bigl\{x \in \mathbb R^d: M^R_\kappa f(x)>\lambda\Bigr\}\Bigr)\leqslant\frac{C}{\lambda}\int_{\mathbb R^d}|f(y)|M^R_\kappa W(y)\,\mathrm{d}\mu_\kappa(y),
\end{equation}
where $\tilde{\mu}_\kappa(X)=\int_XW(y)\,\mathrm{d}\mu_\kappa(y)$ and where $C$ is a positive constant which depends only on $\kappa_1,\ldots,\kappa_d$. The just-written inequality is easy to prove. Indeed, we can show the key inequality
\[
\tilde{\mu}_\kappa(K)\leqslant\frac{C}{\lambda}\int_{\mathbb R^d}|f(y)|M^R_\kappa W(y)\,\mathrm{d}\mu_\kappa(y)
\]
for any compact set $K$ in $E_+$ just as in the proof for the classical maximal operator~(see \cite{stgros}). Therefore
\[
\tilde{\mu}_\kappa(E_+)\leqslant\frac{C}{\lambda}\int_{\mathbb R^d}|f(y)|M^R_\kappa W(y)\,\mathrm{d}\mu_\kappa(y)
\]
and we then deduce \eqref{pourpoids} on account of \eqref{crucial}.
\newline
\indent To conclude, we claim that we can combine the maximal theorem and the weighted inequality for $M_\kappa^R$ with a Calder\'on-Zygmund decomposition of $f$~(see for instance \cite{stt}) to obtain the Fefferman-Stein inequalities for $M_\kappa^R$ following almost verbatim the proof in \cite{fs}.

\begin{thm} \label{fsmkappa}
Let  $(f_n)_{n\geqslant1}$ be a sequence of measurable functions defined on $\mathbb R^d$. 
\begin{enumerate} 
\item If $1<r<+\infty$, $1<p<+\infty$ and if $\bigl(\sum_{n=1}^\infty|f_n(\cdot)|^r\bigr)^\frac{1}{r} \in L^p(\mu_\kappa)$, then we have 
\[
\biggl\|\Bigl(\sum_{n=1}^\infty|M^R_\kappa f_n(\cdot)|^r\Bigr)^\frac{1}{r}\biggr\|_{\kappa,p}\leqslant C\biggl\|\Bigl(\sum_{n=1}^\infty|f_n(\cdot)|^r\Bigr)^\frac{1}{r}\biggr\|_{\kappa,p},
\]
where $C=C(\kappa_1,\ldots,\kappa_d,r,p)$ is independent of $(f_n)_{n\geqslant1}$.
\item If $1<r<+\infty$ and if  $\bigl(\sum_{n=1}^\infty|f_n(\cdot)|^r\bigr)^\frac{1}{r} \in L^1(\mu_\kappa)$, then for every $\lambda>0$ we have 
\[
\mu_\kappa\biggl(\biggl\{x \in \mathbb R^d: \Bigl(\sum_{n=1}^\infty|M^R_\kappa f_n(x)|^r\Bigr)^\frac{1}{r}>\lambda\biggr\} \biggr)\leqslant \frac{C}{\lambda}\biggl\|\Bigl(\sum_{n=1}^\infty|f_n(\cdot)|^r\Bigr)^\frac{1}{r}\biggr\|_{\kappa,1},
\]
where $C=C(\kappa_1,\ldots,\kappa_d,r)$ is independent of $(f_n)_{n\geqslant1}$ and $\lambda$.
\end{enumerate}
\end{thm}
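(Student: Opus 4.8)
The plan is to establish the strong-type inequality of part~(1) in two separate regimes and then to deduce the weak-type inequality of part~(2) by a Calder\'on--Zygmund argument, reducing throughout to finite sequences $(f_n)_{1\le n\le N}$ by monotone convergence and abbreviating $F=\bigl(\sum_n|f_n|^r\bigr)^{1/r}$ and $G=\bigl(\sum_n|M^R_\kappa f_n|^r\bigr)^{1/r}$. The three ingredients I would combine are precisely the maximal theorem (Theorem~\ref{maxim}), the weighted inequality proved just above, and a Calder\'on--Zygmund decomposition adapted to $\mu_\kappa$.

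First I would treat the case $r\le p<+\infty$ by duality. Since $p/r\ge1$, one has $\|G\|_{\kappa,p}^r=\|G^r\|_{\kappa,p/r}=\sup\int_{\mathbb R^d}G^r\,W\,\mathrm d\mu_\kappa$, the supremum running over all $0\le W$ with $\|W\|_{\kappa,(p/r)'}\le1$. For such $W$, Fubini together with the weighted inequality (applied with the exponent $q=r\in(1,+\infty)$ to each $f_n$) gives $\int G^rW=\sum_n\int (M^R_\kappa f_n)^rW\le C\sum_n\int|f_n|^rM^R_\kappa W=C\int F^rM^R_\kappa W$. H\"older's inequality with the conjugate exponents $p/r$ and $(p/r)'$ then yields $\int F^rM^R_\kappa W\le\|F\|_{\kappa,p}^r\,\|M^R_\kappa W\|_{\kappa,(p/r)'}$, and since $(p/r)'\in(1,+\infty]$ the maximal theorem bounds $\|M^R_\kappa W\|_{\kappa,(p/r)'}\le C\|W\|_{\kappa,(p/r)'}\le C$. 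Taking the supremum over $W$ proves part~(1) for every $p\ge r$.

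Next I would prove the weak-type inequality of part~(2). Fix $\lambda>0$ and perform a Calder\'on--Zygmund decomposition of the scalar function $F$ at height $\lambda$, which is legitimate because $\mu_\kappa$ is doubling: this yields a disjoint family $\{R_k\}$ of averaging sets with $\sum_k\mu_\kappa(R_k)\le C\lambda^{-1}\|F\|_{\kappa,1}$, with $F\le\lambda$ off $\Omega=\bigcup_kR_k$ and with the $\mu_\kappa$-average of $F$ over each $R_k$ comparable to $\lambda$. Splitting $f_n=g_n+b_n$ in the usual way (good part equal to $f_n$ off $\Omega$ and to the $\mu_\kappa$-average $\langle f_n\rangle_{R_k}$ on each $R_k$, bad part $b_n=\sum_k(f_n-\langle f_n\rangle_{R_k})\chi_{R_k}$), Minkowski's inequality in $\ell^r$ gives $\bigl(\sum_n|g_n|^r\bigr)^{1/r}\le C\lambda$ pointwise together with $\bigl\|(\sum_n|g_n|^r)^{1/r}\bigr\|_{\kappa,1}\le C\|F\|_{\kappa,1}$; choosing any fixed $p_0>r$ and invoking the strong-type inequality at exponent $p_0$ already obtained, a Chebyshev estimate controls the good part by $C\lambda^{-1}\|F\|_{\kappa,1}$. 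For the bad part I would dilate each $R_k$ to a set $R_k^*$ so that $\mu_\kappa(\bigcup_kR_k^*)\le C\sum_k\mu_\kappa(R_k)$ (again by doubling), and establish, for $x$ outside $\bigcup_kR_k^*$, the pointwise majorisation $M^R_\kappa b_n(x)\le C\,M^R_\kappa\beta_n(x)$, where $\beta_n=\sum_k\bigl(\mu_\kappa(R_k)^{-1}\int_{R_k}|b_n|\,\mathrm d\mu_\kappa\bigr)\chi_{R_k}$; since $(\sum_n|\beta_n|^r)^{1/r}$ satisfies the very same two bounds (pointwise $\le C\lambda$ and $L^1$-norm $\le C\|F\|_{\kappa,1}$), the strong-type inequality at exponent $p_0$ disposes of the bad part exactly as of the good one. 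Adding the two contributions to $\mu_\kappa(\bigcup_kR_k^*)\le C\lambda^{-1}\|F\|_{\kappa,1}$ proves part~(2).

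It then remains to fill the range $1<p<r$ in part~(1); here duality is unavailable because $p/r<1$, so instead I would interpolate, by the Marcinkiewicz theorem in its vector-valued form, between the weak-type $(1,1)$ bound just obtained and the strong-type bound at some fixed $p_1>r$, the subadditivity of $\vec f\mapsto\bigl(\sum_n|M^R_\kappa f_n|^r\bigr)^{1/r}$ making the theorem applicable. The main obstacle is not the soft structure of the argument, which is indeed that of \cite{fs}, but verifying that each classical geometric ingredient survives in the present setting: that a Calder\'on--Zygmund decomposition and a controlled dilation $R_k\mapsto R_k^*$ are available for the averaging sets $R(x,r)$ against the weight $\mu_\kappa$, and that all averages and Chebyshev estimates are understood with respect to $\mu_\kappa$. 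All of this rests on the doubling property of $\mu_\kappa$ and on the covering lemma stated above, together with the symmetry \eqref{crucial}, which allows the whole decomposition to be carried out on $\mathbb R_+^d$, where the sets $R(x,r)$ are genuine products of intervals, and the conclusion then transferred to all of $\mathbb R^d$.
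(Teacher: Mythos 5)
Your argument is correct and is precisely the route the paper takes: the paper proves Theorem \ref{fsmkappa} only by asserting that one combines the maximal theorem, the weighted inequality and a Calder\'on--Zygmund decomposition adapted to $\mu_\kappa$, "following almost verbatim the proof in \cite{fs}", and your proposal is a faithful expansion of exactly that scheme (duality plus the weighted inequality for $p\geqslant r$, Calder\'on--Zygmund for the weak type, Marcinkiewicz interpolation for $1<p<r$). The geometric points you flag --- the decomposition and the dilation $R_k\mapsto R_k^*$ for the sets $R(x,r)$ on $\mathbb R_+^d$ under the doubling weight $\mu_\kappa$, transferred to $\mathbb R^d$ via \eqref{crucial} --- are likewise left to the reader in the paper, so your level of detail matches or exceeds the original.
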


Therefore, in order to prove Theorem \ref{mainresult}, it remains to show that the operator~$M_\kappa^R$ controls $M_\kappa^Q$. More precisely, we have the following proposition.

\begin{pr} \label{besoin2}
There exists a positive constant $C$ such that for every $x \in \mathbb R_{\,\mathrm{reg}}^d$ we have 
\[
M^Q_\kappa f(x)\leqslant CM^R_\kappa f(x).
\]
\end{pr}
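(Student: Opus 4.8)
The plan is to dominate, uniformly in $r$, each averaged quantity defining $M^Q_\kappa f(x)$ by the corresponding $R(x,r)$-average defining $M^R_\kappa f(x)$. First I would pass from $f$ to $|f|$ and from the open cube to the closed one. Since $\tau^\kappa_x(\chi_{{}_{\overline Q_r}})$ is positive on $\mathbb R^d_{\,\mathrm{reg}}$ by Lemma \ref{cle}, and $\chi_{{}_{Q_r}}=\chi_{{}_{\overline Q_r}}$ off a $\mu_\kappa$-null set, for $x\in\mathbb R^d_{\,\mathrm{reg}}$ one has
\[
\frac{1}{\mu_\kappa(Q_r)}\biggl|\int_{\mathbb R^d}f(y)\tau^\kappa_x(\chi_{{}_{Q_r}})(-y)\,\mathrm d\mu_\kappa(y)\biggr|\leqslant\frac{1}{\mu_\kappa(Q_r)}\int_{\mathbb R^d}|f(y)|\,\tau^\kappa_x(\chi_{{}_{\overline Q_r}})(-y)\,\mathrm d\mu_\kappa(y).
\]
The whole idea is then to exploit the tensor structure $\tau^\kappa_x(\chi_{{}_{\overline Q_r}})(-y)=\prod_{j=1}^d\tau^{\kappa_j}_{x_j}(\chi_{{}_{[-r,r]}})(-y_j)$ coming from \eqref{taucube}, so that the two one-dimensional ingredients — the support of $\nu^{\kappa,+}$ and the sharp bound of Proposition \ref{abdelkefisifi} — may be applied factor by factor.

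Second, I would localize the integrand. For each $j$, formula \eqref{txchir} writes $\tau^{\kappa_j}_{x_j}(\chi_{{}_{[-r,r]}})(-y_j)=\int_\mathbb R\chi_{{}_{[-r,r]}}(z_j)\,\mathrm d\nu^{\kappa_j,+}_{x_j,-y_j}(z_j)$, and the density of $\nu^{\kappa_j,+}_{x_j,-y_j}$ carries the factor $K_{\kappa_j}(|x_j|,|y_j|,|z_j|)$, hence is supported where $|z_j|\in\bigl[\,\bigl||x_j|-|y_j|\bigr|,|x_j|+|y_j|\bigr]$. For this support to meet $[-r,r]$ one needs $\bigl||x_j|-|y_j|\bigr|\leqslant r$, that is $|y_j|\in I(x_j,r)$. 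Thus the $j$-th factor vanishes unless $\tilde y_j\in I(x_j,r)$, and consequently $\tau^\kappa_x(\chi_{{}_{\overline Q_r}})(-y)=0$ whenever $\tilde y\notin R(x,r)$. This lets me restrict the integral to $\{\,\tilde y\in R(x,r)\,\}$, which is precisely the domain occurring in $M^R_\kappa$.

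Third, on that domain I would insert the sharp pointwise bound. Proposition \ref{abdelkefisifi} gives $\tau^{\kappa_j}_{x_j}(\chi_{{}_{[-r,r]}})(-y_j)\leqslant C\,\mu_{\kappa_j}(]-r,r[)/\mu_{\kappa_j}(I(x_j,r))$ uniformly in $y_j$, so taking the product over $j$ and using that $\mu_\kappa$ factorizes (whence $\prod_j\mu_{\kappa_j}(]-r,r[)=\mu_\kappa(Q_r)$ and $\prod_j\mu_{\kappa_j}(I(x_j,r))=\mu_\kappa(R(x,r))$) yields, for $\tilde y\in R(x,r)$,
\[
\tau^\kappa_x(\chi_{{}_{\overline Q_r}})(-y)\leqslant C^d\,\frac{\mu_\kappa(Q_r)}{\mu_\kappa\bigl(R(x,r)\bigr)}.
\]
Plugging this in, the factor $\mu_\kappa(Q_r)$ cancels the normalisation $1/\mu_\kappa(Q_r)$ and one is left with
\[
\frac{1}{\mu_\kappa(Q_r)}\int_{\mathbb R^d}|f(y)|\,\tau^\kappa_x(\chi_{{}_{\overline Q_r}})(-y)\,\mathrm d\mu_\kappa(y)\leqslant C^d\,\frac{1}{\mu_\kappa(R(x,r))}\int_{\tilde y\in R(x,r)}|f(y)|\,\mathrm d\mu_\kappa(y)\leqslant C^dM^R_\kappa f(x).
\]
Taking the supremum over $r>0$ gives $M^Q_\kappa f(x)\leqslant C^dM^R_\kappa f(x)$, which is the claim.

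Each step is short once the earlier machinery is in place; the crux — and the only place where real care is needed — is the matching of normalisations in the third step. The exponents in $K_\kappa$ and the definitions of $Q_r$ and $R(x,r)$ are calibrated so that the product of the one-dimensional bounds is \emph{exactly} $\mu_\kappa(Q_r)/\mu_\kappa(R(x,r))$; this is what makes $\mu_\kappa(Q_r)$ cancel and produces a genuine $R(x,r)$-average rather than a mismatched quantity. I expect the main obstacle to be purely in the bookkeeping: verifying that the one-dimensional support interval $\{|y_j|\in I(x_j,r)\}$ is exactly the $j$-th slot of $R(x,r)$, and that the only $\kappa$-dependence of the final constant is the $d$-th power of the constant from Proposition \ref{abdelkefisifi}.
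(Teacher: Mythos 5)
Your proposal is correct and follows essentially the same route as the paper's own proof: positivity and the tensor factorization $\tau^\kappa_x(\chi_{{}_{\overline Q_r}})(-y)=\prod_j\tau^{\kappa_j}_{x_j}(\chi_{{}_{[-r,r]}})(-y_j)$ from Lemma \ref{cle} and \eqref{taucube}, the support restriction to $\{\tilde y\in R(x,r)\}$ deduced from \eqref{txchir}, the one-dimensional bound of Proposition \ref{abdelkefisifi} applied factor by factor, and the factorizations $\mu_\kappa(Q_r)=\prod_j\mu_{\kappa_j}(]-r,r[)$, $\mu_\kappa(R(x,r))=\prod_j\mu_{\kappa_j}(I(x_j,r))$. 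The only difference is cosmetic: you spell out the support argument via the cutoff in $K_{\kappa_j}$, which the paper leaves as a remark.
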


\begin{proof}Let $x \in \mathbb R_{\,\mathrm{reg}}^d$ and $r>0$. By the definition of the Dunkl convolution we have
\[
\bigl|(f*_\kappa\chi_{{}_{{Q}_r}})(x)\bigr|=c_\kappa\biggl|\int_{\mathbb R^d}f(y)\tau^\kappa_x(\chi_{{}_{{Q}_r}})(-y)\,\mathrm{d}\mu_\kappa(y)\biggr|,
\]
from which we deduce at once that
\[
\bigl|(f*_\kappa\chi_{{}_{{Q}_r}})(x)\bigr|=c_\kappa\biggl|\int_{\mathbb R_{\,\text{reg}}^d}f(y)\tau^\kappa_x(\chi_{{}_{{\overline{Q}}_r}})(-y)\,\mathrm{d}\mu_\kappa(y)\biggr|.
\]
Using the positivity of $\tau^\kappa_x(\chi_{{}_{{\overline{Q}}_r}})$ gives us
\[
\bigl|(f*_\kappa\chi_{{}_{{Q}_r}})(x)\bigr|\leqslant c_\kappa\int_{\mathbb R_{\,\text{reg}}^d}|f(y)|\tau^\kappa_x(\chi_{{}_{{\overline{Q}}_r}})(-y)\,\mathrm{d}\mu_\kappa(y).
\]
On account of \eqref{taucube} we then obtain
\[
\bigl|(f*_\kappa\chi_{{}_{{Q}_r}})(x)\bigr|\leqslant c_\kappa\int_{\mathbb R_{\,\text{reg}}^d}|f(y)|\prod_{j=1}^d\tau^{\kappa_j}_{x_j}(\chi_{{}_{[-r,r]}})(-y_j)\,\mathrm{d}\mu_\kappa(y).
\]
Since we can readily deduce from \eqref{txchir} the following  property 
\[
|y_j| \notin I(x_j,r)\Longrightarrow\tau^{\kappa_j}_{x_j}(\chi_{{}_{[-r,r]}})(y_j)=0,
\]
we can write
\[
\bigl|(f*_\kappa\chi_{{}_{{Q}_r}})(x)\bigr|\leqslant c_\kappa\int_{A_x}|f(y)|\prod_{j=1}^d\tau^{\kappa_j}_{x_j}(\chi_{{}_{[-r,r]}})(-y_j)\,\mathrm{d}\mu_\kappa(y),
\]
where $A_x$ is the following set
\[
A_x=\mathbb R^d_{\,\mathrm{reg}}\cap\bigl\{y \in \mathbb R^d: \tilde{y} \in R(x,r)\bigr\}.
\] 
If we now apply the inequality of Proposition \ref{abdelkefisifi} we get
\[
\bigl|(f*_\kappa\chi_{{}_{{Q}_r}})(x)\bigr|\leqslant C\int_{A_x}|f(y)|\prod_{j=1}^d\frac{\mu_{\kappa_j}\bigl(]-r,r[\bigr)}{\mu_{\kappa_j}\bigl(I(x_j,r)\bigr)}\,\mathrm{d}\mu_\kappa(y).
\]
The following obvious equalities
\[
\prod_{j=1}^d\mu_{\kappa_j}\bigl(]-r,r[\bigr)=\mu_\kappa(Q_r),\quad \prod_{j=1}^d\mu_{\kappa_j}\bigl(I(x_j,r)\bigr)=\mu_\kappa\bigl(R(x,r)\bigr),
\]
imply that
\[
\bigl|(f*_\kappa\chi_{{}_{{Q}_r}})(x)\bigr|\leqslant \frac{C\mu_\kappa(Q_r)}{\mu_\kappa\bigl(R(x,r)\bigr)}\int_{A_x}|f(y)|\,\mathrm{d}\mu_\kappa(y),
\]
from which we deduce that 
\[
\frac{1}{\mu_\kappa(Q_r)}\bigl|(f*_\kappa\chi_{{}_{{Q}_r}})(x)\bigr|\leqslant \frac{C}{\mu_\kappa\bigl(R(x,r)\bigr)}\int_{\tilde{y} \in R(x,r)}|f(y)|\,\mathrm{d}\mu_\kappa(y).
\]
It follows that 
\[
\frac{1}{\mu_\kappa(Q_r)}\bigl|(f*_\kappa\chi_{{}_{{Q}_r}})(x)\bigr|\leqslant CM_\kappa^Rf(x),
\]
and then the result.
\end{proof}

This result, combined with Proposition \ref{besoin},  leads immediately to the following corollary.
\begin{co} \label{corol}
There exists a positive constant $C$ such that for every $x \in \mathbb R^d_{\,\mathrm{reg}}$ we have 
\[
0\leqslant M_\kappa f(x)\leqslant CM^R_\kappa f(x).
\]
\end{co}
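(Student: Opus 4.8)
The plan is to obtain the desired chain of inequalities by simply composing the two estimates already at our disposal, namely Proposition \ref{besoin} and Proposition \ref{besoin2}; no new analysis is required. First I would recall that for $x \in \mathbb R^d_{\,\mathrm{reg}}$, Proposition \ref{besoin} furnishes the bound $0 \leqslant M_\kappa f(x) \leqslant C M^Q_\kappa |f|(x)$. This already disposes of the left-hand inequality $0 \leqslant M_\kappa f(x)$, so the only task remaining is to control the quantity $M^Q_\kappa |f|(x)$ by $M^R_\kappa f(x)$.

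Next I would invoke Proposition \ref{besoin2}, which asserts that $M^Q_\kappa g(x) \leqslant C M^R_\kappa g(x)$ for every $x \in \mathbb R^d_{\,\mathrm{reg}}$ and every admissible $g$. The natural move is to apply this inequality not to $f$ itself but to the function $|f|$, which yields $M^Q_\kappa |f|(x) \leqslant C M^R_\kappa |f|(x)$.

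The one point that deserves a moment's attention — and essentially the only thing one must check — is the bookkeeping of absolute values across the three operators. Because the weighted operator $M^R_\kappa$ already carries $|f(y)|$ inside its defining integral, one has the identity $M^R_\kappa |f| = M^R_\kappa f$; thus passing to $|f|$ in Proposition \ref{besoin2} costs nothing on the right-hand side. Chaining the two displayed bounds then gives, for $x \in \mathbb R^d_{\,\mathrm{reg}}$,
\[
0 \leqslant M_\kappa f(x) \leqslant C M^Q_\kappa |f|(x) \leqslant C M^R_\kappa |f|(x) = C M^R_\kappa f(x),
\]
where the composite constant depends only on $\kappa_1,\ldots,\kappa_d$ through the constants supplied by the two propositions.

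I do not anticipate any genuine obstacle here: the substantive work — controlling $M_\kappa$ by $M_\kappa^Q$ via the positivity of $\tau^\kappa_x(\chi_{{}_{\overline Q_r}})$ and the comparison $\chi_{{}_{\overline B_r}}\leqslant\chi_{{}_{\overline Q_r}}$, and then controlling $M_\kappa^Q$ by $M_\kappa^R$ via Proposition \ref{abdelkefisifi} — has already been carried out in Propositions \ref{besoin} and \ref{besoin2}. The corollary is therefore purely a transitivity statement, and the proof is complete once the absolute-value identity $M^R_\kappa|f|=M^R_\kappa f$ is noted.
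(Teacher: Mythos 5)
Your proposal is correct and matches the paper's argument exactly: the corollary is obtained by chaining Proposition \ref{besoin} with Proposition \ref{besoin2} applied to $|f|$, and your observation that $M^R_\kappa|f|=M^R_\kappa f$ (since the definition of $M^R_\kappa$ already involves $|f(y)|$) is the only detail worth recording.
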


Then, Theorem \ref{mainresult} is true thanks to this corollary  and the Fefferman-Stein inequalities for $M_\kappa^R$ (Theorem \ref{fsmkappa}).

\begin{rem}
Let us point out that Corollary \ref{corol}, together with the maximal result for~$M_\kappa^R$ (Theorem \ref{maxim}), implies a maximal theorem for $M_\kappa$ (proved in \cite{tx}) without using the Hopf-Dunford-Schwartz ergodic theorem (which is a general method given in  \cite{stlp}). 
\end{rem}

\section{Application}

Since the Fefferman-Stein inequalities are an important tool in Harmonic analysis, we would like to define a large class of operators such that each operator of this class satisfies these inequalities, and such that, in particular, the maximal operator associated with the Dunkl heat semigroup and the maximal operator associated with the Dunkl-Poisson semigroup belong to this class (see \cite{stlp} for details about the classical heat semigroup and the classical Poisson semigroup).\newline
\indent To become more precise, let us now introduce this class of operators.
\begin{de}
Let $\phi \in L^1(\mu_\kappa)$ be a radial function, that is $\phi(x)=\tilde{\phi}\bigl(\|x\|\bigr)$ for every~{$x \in \mathbb R^d$}, such that $\tilde{\phi}$  is differentiable and satisfies the following properties
\[
\lim_{r\to\infty}\tilde{\phi}(r)=0, \quad \int_0^\infty r^{2\gamma_\kappa+d}\Bigl|\frac{\mathrm{d}}{\mathrm{d} r}\tilde\phi(r)\Bigr|\,\mathrm{d} r<+\infty.
\]
Then we denote by $M^\phi_\kappa$ the following operator
\[
M^\phi_\kappa f(x)=\sup_{t>0}\bigl|(f*_\kappa \phi_t)(x)\bigr|, \quad x \in \mathbb R^d,
\]
where  $\phi_t$ is for every $t>0$  the dilation of $\phi$ given by
\[
\phi_t(x)=\frac{1}{t^{2\gamma_\kappa+d}}\,\phi\Bigl(\frac{x}{t}\Bigr), \quad x \in \mathbb R^d.
\] 
\end{de}

Let us present two important examples of functions which satisfy the conditions of the previous definition.
\newline The first one is concerned with the Dunkl heat kernel $q_\kappa^t$. Indeed if  we let  
\[
\phi(x)=\mathrm{e}^{-\frac{\|x\|^2}{2}}, \quad x \in \mathbb R^d,
\]
then for every $t>0$ we have
\[
\phi_{{\sqrt{2t}}}(x)=\frac{1}{(2t)^{\gamma_\kappa+\frac{d}{2}}}\,\mathrm{e}^{-\frac{\|x\|^2}{4t}}=q_\kappa^t(x).
\]
In this case, $M_\kappa^\phi$ is therefore the maximal function of the Dunkl heat semigroup.
Our second example deals with the Dunkl-Poisson kernel. If we define the function~$\phi$ for every $x \in \mathbb R^d$ by
\[
\phi(x)=\frac{a_\kappa}{(1+\|x\|^2)^{\gamma_\kappa+\frac{d+1}{2}}}, \ \ \text{with}\ \ a_\kappa=\frac{c_\kappa\,2^{\gamma_\kappa+\frac{d}{2}}\,\Gamma\bigl(\gamma_\kappa+\frac{d+1}{2}\bigr)}{\sqrt{\pi}},
\]
then for every $t>0$ we have 
\[
\phi_t(x)=\frac{a_\kappa \,t}{(t^2+\|x\|^2)^{\gamma_\kappa+\frac{d+1}{2}}}=P_\kappa^t(x),
\]
which is the Dunkl-Poisson kernel (for more details about this kernel, the reader is referred to \cite{rovoit} and \cite{tx}). Thus, in this case, $M_\kappa^\phi$ is the maximal function associated with the Dunkl-Poisson semigroup. 
\newline \indent We now state the Fefferman-Stein inequalities for $M_\kappa^\phi$ (for $\phi, \tilde{\phi}$ and $\phi_t$ as above).
\begin{thm}
Let  $(f_n)_{n\geqslant1}$ be a sequence of measurable functions defined on $\mathbb R^d$. 
\begin{enumerate} 
\item If $1<r<+\infty$, $1<p<+\infty$ and if $\bigl(\sum_{n=1}^\infty|f_n(\cdot)|^r\bigr)^\frac{1}{r} \in L^p(\mu_\kappa)$, then we have 
\[
\biggl\|\Bigl(\sum_{n=1}^\infty|M^\phi_\kappa f_n(\cdot)|^r\Bigr)^\frac{1}{r}\biggr\|_{\kappa,p}\leqslant C\biggl\|\Bigl(\sum_{n=1}^\infty|f_n(\cdot)|^r\Bigr)^\frac{1}{r}\biggr\|_{\kappa,p},
\]
where $C=C(\phi,\kappa_1,\ldots,\kappa_d,r,p)$ is independent of $(f_n)_{n\geqslant1}$.
\item If $1<r<+\infty$ and if  $\bigl(\sum_{n=1}^\infty|f_n(\cdot)|^r\bigr)^\frac{1}{r} \in L^1(\mu_\kappa)$, then for every $\lambda>0$ we have
\[
\mu_\kappa\biggl(\biggl\{x \in \mathbb R^d: \Bigl(\sum_{n=1}^\infty|M^\phi_\kappa f_n(x)|^r\Bigr)^\frac{1}{r}>\lambda\biggr\} \biggr)\leqslant \frac{C}{\lambda}\biggl\|\Bigl(\sum_{n=1}^\infty|f_n(\cdot)|^r\Bigr)^\frac{1}{r}\biggr\|_{\kappa,1},
\]
where $C=C(\phi,\kappa_1,\ldots,\kappa_d,r)$ is independent of $(f_n)_{n\geqslant1}$ and $\lambda$.
\end{enumerate}
\end{thm}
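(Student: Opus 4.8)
The plan is to prove a pointwise domination $M^\phi_\kappa f(x)\le C_\phi M^R_\kappa f(x)$ for every $x\in\mathbb R^d_{\,\mathrm{reg}}$, with $C_\phi$ depending on $\phi$ through $\int_0^\infty u^{2\gamma_\kappa+d}|\tilde\phi'(u)|\,\mathrm{d}u$, and then to invoke the Fefferman-Stein inequalities already established for $M^R_\kappa$ (Theorem \ref{fsmkappa}). Since $\mathbb R^d\setminus\mathbb R^d_{\,\mathrm{reg}}$ is a finite union of hyperplanes, it is $\mu_\kappa$-negligible, so such a bound holds $\mu_\kappa$-almost everywhere and that is all that is needed.

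First I would write the radial profile as a superposition of characteristic functions of balls. Using $\lim_{r\to\infty}\tilde\phi(r)=0$ and the fundamental theorem of calculus,
\[
\phi(x)=\tilde\phi(\|x\|)=-\int_0^\infty\tilde\phi'(\rho)\,\chi_{{}_{B_\rho}}(x)\,\mathrm{d}\rho,
\]
and after dilating and substituting $r=t\rho$ this becomes, for every $t>0$,
\[
\phi_t(x)=-\frac{1}{t^{2\gamma_\kappa+d+1}}\int_0^\infty\tilde\phi'\Bigl(\frac{r}{t}\Bigr)\chi_{{}_{B_r}}(x)\,\mathrm{d}r.
\]
Then I would insert this into the Dunkl convolution. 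For $x\in\mathbb R^d_{\,\mathrm{reg}}$, the translate $\tau^\kappa_x(\phi_t)(-y)$ admits, exactly as in Lemma \ref{cle} and \eqref{repboule} (via the same heat-kernel regularization), a nonnegative representation against the positive measure $\upsilon^\kappa_{x,-y}$; this positivity is what lets me pass $\tau^\kappa_x$ through the $\rho$-superposition and interchange the $r$- and $y$-integrations. Taking absolute values and using $\tau^\kappa_x(\chi_{{}_{B_r}})(-y)\ge0$ yields
\[
\bigl|(f*_\kappa\phi_t)(x)\bigr|\le\frac{1}{t^{2\gamma_\kappa+d+1}}\int_0^\infty\Bigl|\tilde\phi'\Bigl(\frac{r}{t}\Bigr)\Bigr|\Bigl(c_\kappa\int_{\mathbb R^d}|f(y)|\,\tau^\kappa_x(\chi_{{}_{B_r}})(-y)\,\mathrm{d}\mu_\kappa(y)\Bigr)\,\mathrm{d}r.
\]

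Next I would estimate the inner parenthesis, which is precisely $(|f|*_\kappa\chi_{{}_{B_r}})(x)$, by $\mu_\kappa(B_r)M_\kappa|f|(x)\le C\mu_\kappa(B_r)M^R_\kappa f(x)$, using the definition of $M_\kappa$ together with Corollary \ref{corol}. Since $\mu_\kappa(B_r)=c_0\,r^{2\gamma_\kappa+d}$ by the polar-coordinate computation in the proof of Proposition \ref{besoin}, the substitution $u=r/t$ collapses the remaining $r$-integral into $c_0\int_0^\infty u^{2\gamma_\kappa+d}|\tilde\phi'(u)|\,\mathrm{d}u$, a constant independent of $t$ and finite by hypothesis. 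Taking the supremum over $t>0$ gives the announced pointwise bound $M^\phi_\kappa f(x)\le C_\phi M^R_\kappa f(x)$.

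The hard part will be justifying the interchange of integrations and the passage of the (non-positive) operator $\tau^\kappa_x$ through the superposition: everything hinges on the positive integral representation of $\tau^\kappa_x(\phi_t)$ on the regular set, which turns $\tau^\kappa_x(\phi_t)$ into a genuine nonnegative average of ball-translations and makes Tonelli applicable once $|f|$ replaces $f$ (the integrability hypothesis on $r^{2\gamma_\kappa+d}|\tilde\phi'(r)|$ guaranteeing absolute convergence). Granting the $\mu_\kappa$-a.e. domination, both assertions follow at once: applying it to each $f_n$ gives $\bigl(\sum_n|M^\phi_\kappa f_n|^r\bigr)^{1/r}\le C_\phi\bigl(\sum_n|M^R_\kappa f_n|^r\bigr)^{1/r}$, whence the strong $(p,p)$ and weak $(1,1)$ vector-valued inequalities of Theorem \ref{fsmkappa} conclude the proof.
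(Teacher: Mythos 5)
Your argument is correct and follows essentially the same route as the paper: both reduce the theorem to the pointwise bound $\sup_{t>0}|(f*_\kappa\phi_t)(x)|\leqslant C_\phi M_\kappa^R f(x)$ (the paper states it with $M_\kappa$ and then invokes Theorem \ref{mainresult}, which is itself deduced from Corollary \ref{corol} and Theorem \ref{fsmkappa}, so the two formulations are interchangeable). The only difference is that you re-derive the key convolution estimate via the layer-cake decomposition of $\phi_t$ and the positive representation of $\tau^\kappa_x(\chi_{{}_{B_r}})$, whereas the paper simply cites the proof of Theorem $7.5$ in \cite{tx} for that step and then rescales in $t$.
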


\begin{proof}
The proof is nearly obvious. Indeed, according to the proof of Theorem $7.5$ in \cite{tx}, we have for such a function $\phi$ and for $x \in \mathbb R^d$ 
\[
\bigl|(f*_\kappa\phi)(x)\bigr|\leqslant CM_\kappa f(x)\int_0^\infty r^{2\gamma_\kappa+d}\Bigl|\frac{\mathrm{d}}{\mathrm{d} r}\tilde\phi(r)\Bigr|\,\mathrm{d} r,
\]
where $C$ depends only on $\kappa_1,\ldots,\kappa_d$. Therefore, for every $t>0$ we get  
\[
\bigl|(f*_\kappa\phi_t)(x)\bigr|\leqslant CM_\kappa f(x)\int_0^\infty r^{2\gamma_\kappa+d}\Bigl|\frac{\mathrm{d}}{\mathrm{d} r}\widetilde{\phi_t}(r)\Bigr|\,\mathrm{d} r,
\]
with $C$ independent of $t$. Since we have 
\[
\frac{\mathrm{d}}{\mathrm{d} r}\widetilde{\phi_t}(r)=\frac{1}{t^{2\gamma_\kappa+d+1}}\frac{\mathrm{d}}{\mathrm{d} r}\tilde{\phi}\Bigl(\frac{r}{t}\Bigr),
\]
we can write
\[
\bigl|(f*_\kappa\phi_t)(x)\bigr|\leqslant CM_\kappa f(x)\int_0^\infty \frac{r^{2\gamma_\kappa+d}}{t^{2\gamma_\kappa+d+1}}\Bigl|\frac{\mathrm{d}}{\mathrm{d} r}\tilde{\phi}\Bigl(\frac{r}{t}\Bigr)\Bigr|\,\mathrm{d} r.
\]
A change of variables gives us
\[
\bigl|(f*_\kappa\phi_t)(x)\bigr|\leqslant CM_\kappa f(x)\int_0^\infty r^{2\gamma_\kappa+d}\Bigl|\frac{\mathrm{d}}{\mathrm{d} r}\tilde\phi(r)\Bigr|\,\mathrm{d} r,
\]
from which we deduce that
\[
\sup_{t>0}\bigl|(f*_\kappa\phi_t)(x)\bigr|\leqslant CM_\kappa f(x),
\]
where $C$ depends only on $\kappa_1,\ldots,\kappa_d$ and $\phi$. If we now apply Theorem \ref{mainresult} we obtain the desired result.
\end{proof}
\bibliography{dimdv2revised}
\end{document}